\numberwithin{equation}{section} 
\numberwithin{figure}{section}
\newtheorem{stuff}{Stuff}[section]
\newtheorem{theorem}[stuff]{\bf Theorem}
\newtheorem{proposition}[stuff]{\bf Proposition}
\newtheorem{lemma}[stuff]{\bf Lemma} 
\newtheorem{corollary}[stuff]{\bf Corollary} 
\newtheorem{question}[stuff]{\bf Question} 
\newenvironment{definition}{%
\vskip1ex\refstepcounter{stuff}\trivlist \itemindent 0pt
\item[\hskip\labelsep\bf Definition \thestuff.]%
\ignorespaces}{\endtrivlist\vskip1ex}%
\newenvironment{remark}{%
\vskip1ex\refstepcounter{stuff}\trivlist \itemindent 0pt 
\item[\hskip\labelsep\bf Remark \thestuff.]%
\ignorespaces}{\endtrivlist\vskip1ex}%
\newtheorem{s-theorem}[sstuff]{\bf Theorem} 
\newtheorem{s-proposition}[sstuff]{\bf Proposition} 
\let\rar\rightarrow 
\let\lar\longrightarrow
\let\hra\hookrightarrow 
\let\mt\mapsto 
\let\lmt\longmapsto 
\let\xrar\xrightarrow 
\font\tenmsa=msam10 %
\newcommand\hdashpiece{%
{\vrule height2.75pt depth-2.35pt width2.3pt \kern1.7pt}}%
\newcommand\hdashpieces{%
{\hdashpiece\hdashpiece\hdashpiece\hdashpiece}}%
\newcommand\dashto{\mathrel{%
\hdashpiece\hdashpiece\kern-0.4pt\hbox{\tenmsa K}}}%
\newcommand\dashar{\mathrel{%
\hdashpieces\kern-0.4pt\hbox{\tenmsa K}}}%
\let\euf\EuScript 
\let\cal\mathcal 
\let\mbb\mathbb
\DeclareFontFamily{OT1}{rsfs}{} 
\DeclareFontShape{OT1}{rsfs}{n}{it}{<->rsfs10}{} 
\DeclareMathAlphabet{\crl}{OT1}{rsfs}{n}{it} 
\let\unbar\underbar
\let\nit\noindent 
\let\disp\displaystyle 
\let\srel\stackrel
\let\veps\varepsilon 
\let\disc\triangle
\newcommand\del{{\partial}} 
\newcommand\rd{{\rm d}}
\newcommand\Pic{\mathop{\rm Pic}\nolimits} 
\newcommand\Ker{{\rm Ker}} 
\newcommand\invq{{\slash\kern-0.65ex\slash}}
\let\l\lambda 
\let\d\delta 
\let\O\Omega 
\let\si\sigma
\let\les\leqslant 
\let\ges\geqslant
\newcommand\bone{{1\kern-0.57ex\rm l}} 
\newcommand\pr{\mathop{\rm pr}\nolimits}
\newcommand\mx{{\rm max}} 
\newcommand\Img{{\rm Image}}
\newcommand{\tS}{{\tilde S}} 
\newcommand{\res}{{\rm res}}
\begin{document} 
 
\title{Modular properties  
of nodal curves on $K3$ surfaces} 
\author{Mihai Halic} 
\address{Dept. of Mathematics and Statistics,  
KFUPM P.O. Box 5046, Dhahran 31261, Saudi Arabia} 
\subjclass[2000]{Primary: 14H10, 14J28; Secondary: 14D15}

\begin{abstract} 
In this paper we partially address two questions which have been raised  
in \cite{fkps}:  
\begin{enumerate} 
\item[--] The first is a rigidity property for pairs $(S,C)$ consisting of  
a general projective $K3$ surface $S$, and a curve $C$ obtained as the  
normalization of a nodal, hyperplane section of $S$. We prove that a  
non-trivial deformation of such a pair $(S,C)$ induces a non-trivial  
deformation of $C$; 
\item[--] The second question concerns the Wahl map of curves $C$ as above.  
We prove that the Wahl map of the normalization of a nodal curve contained  
in a general projective $K3$ surface is non-surjective.  
\end{enumerate} 
In both cases, we impose upper bounds on the number of nodes of the  
hyperplane section. 
\end{abstract} 
 
\maketitle 
\markboth{\sc Mihai Halic}%
{\sc Modular properties of nodal curves on $K3$ surfaces} 
 
\section*{Introduction} 
 
Curves on $K3$ surfaces have been investigated from various points  
of view, and there is an extensive literature concerning their  
properties.  
Most attention has been payed to the {\it smooth} curves.  
In a series of articles Mukai studied the properties of the morphism  
\begin{equation*}{\tag{$\mu$}} 
\left\{ 
(S,C)\;\biggl| 
\begin{array}{l} 
S\text{ is a general projective $K3$ surface, and } 
\\  
C\text{ is a smooth hyperplane section of genus }g.  
\end{array} 
\biggr. 
\right\} 
\srel{\mu}{\lar}  
\crl M_g, 
\end{equation*} 
which associates to a pair $(S,C)$ consisting of a general, projective $K3$  
surface the class of the curve $C$ in the Deligne-Mumford space.  He  
proved in \cite{mu} that the morphism $(\mu)$ is finite for $g\ges 13$,  
and then he went on proving that it is actually birational  
(see \cite[theorem 1.2]{mu2}).  
These topics are nicely surveyed in \cite{mu3} and \cite{beau}.  
\smallskip 
 
By contrast {\it nodal} curves on $K3$ surfaces have received somewhat less  
attention. The existence of nodal curves on $K3$ surfaces has been addressed  
in \cite{mm}, and later on generalized in \cite{chen}.  
The deformation theory of nodal  curves on $K3$ surfaces has been treated in  
\cite{ta}, and recently in \cite{fkps}. The goal of this paper is to (partially)  
address the following two questions raised in \cite{fkps}:\smallskip 
  
\begin{enumerate} 
\item The first problem (see 5.7(ii) in {\it loc.\,cit.}) concerns the  
finiteness of the forgetful morphism $(\mu)$, where one considers now  
pairs $(S,C)$ such that $C$ is the normalization of a nodal curve on $S$.  
\item The second problem is to find obstructions for embedding nodal curves  
into $K3$ surfaces.  
More precisely, a result due to Wahl says that for a smooth curve $C$  
lying on a projective $K3$ surface, the homomorphism  
\begin{equation*} 
w_C:\hbox{$\overset{2}\bigwedge$}\,  
H^0(C,\cal K_C)\rar H^0(C,\cal K_C^{\otimes 3}) 
\end{equation*} 
is non-surjective (see \cite{wa,bm}). The question raised in  
\cite[question 5.5]{fkps} is the following:  
suppose that $C$ is the normalization of a nodal curve on a projective  
$K3$ surface.  
Is it true that the homomorphism $w_C$ is still non-surjective?  
\end{enumerate}\smallskip 
 
For these questions we have the following two answers:\medskip  
 
\nit{\bf Theorem}\quad{\it  
For two positive integers $n$ and $d$ (subject to the inequalities  
below), we define:\\[1ex] 
\centerline{$ 
\d_\mx(n,d)=\left\{ 
\begin{array}[c]{ll} 
\bigl\lfloor\frac{n}{2}\bigr\rfloor-25& 
\text{ if $d=1$ and $n\ges 50$}; 
\\[1.5ex] 
2n-27& 
\text{ if $d=2$ and $n\ges 14$};  
\\[1.5ex] 
2(n-1)(d-1)-25& 
\text{ if $d\ges 3$ and $n=11$ or $n\ges 13$}. 
\end{array}\right.$}\smallskip 
 
\nit{\rm (i)} The forgetful morphism\\[1ex]  
$ 
\left\{\kern-.5ex 
(S,C,u) 
\left| 
\begin{array}{l} 
(S,\cal A)\in\crl K_n,\,C\in\crl M_{n-\d},\text{ and }\,  
u:C\rar S\text{ is a morphism} 
\\  
\text{s.t. }u_*C\hra S  
\text{ is a reduced, nodal curve with }\d\text{ nodes,} 
\\  
\text{which belongs to the linear system }|d\cal A|. 
\end{array} 
\right.\kern-1ex 
\right\} 
\srel{\mu}{\lar}\crl M_{1+(n-1)d^2-\d}$\\[1ex] 
is generically finite onto its image.\smallskip  
 
\nit{\rm (ii)} Suppose $(S,\cal A)$ is a polarized $K3$ surface,  
with $\Pic(S)=\mbb Z\cal A$, $\cal A^2=2(n-1)$,  
and consider a nodal, hyperplane section $\hat C$ of $S$ of degree  
$d$, with $\d$ nodes. Assume that  
$$ 
\d\les\min\left\{\d_\mx(n,d),\frac{(n-1)d^2-1}{3}\right\}. 
$$ 
Let $C$ be the normalization of $\hat C$.  
Then the Wahl map of $C$ is not surjective. 
}\medskip 
 
A remark concerning the upper bound appearing in (ii) above:  
there are few articles discussing the surjectivity properties of the  
normalization of nodal curves on surfaces.  
Actually, the author of this paper could find only the reference  
\cite{clm}, which deals with the surjectivity of the Wahl map of  
{\em plane} nodal curves.  
In that reference, the authors impose an upper bound on the number  
of nodes too.\smallskip  
 
This article is structured as follows:\smallskip  
 
-- In the first section we briefly recall basic facts concerning the  
deformation theory of curves on surfaces, and fix the notations used  
throughout the article.\smallskip 
 
-- The second section contains our main technical tool used for  
answering the two above mentioned questions. It is well-known  
that the tangent bundle of any $K3$ surface $S$ is stable.  
In proposition \ref{tech} we give an effective upper bound for the number  
of nodes of a nodal curve $\hat C\hra S$, such that the pull-back  
of the tangent bundle of $S$ to the normalization of $\hat C$ is  
still stable.\smallskip  
 
-- The third and the fourth sections contain the proofs of the first,  
respectively the second main result.\smallskip


\section{Description of the problem}{\label{sect:setup}} 
 
Throughout the article we will work over the field $\mbb C$  
of complex numbers.  
Most of the material appearing in this section is contained in the articles  
\cite{beau} and \cite{fkps}.  
Here we will introduce only those objects, and recall those properties,  
which are essential for our presentation.  
 
\begin{definition} 
\begin{enumerate} 
\item We say that a polarized $K3$ surface $(S,A)$ is  
{\it Picard general} if  
$$ 
\Pic(S)=\mathbb Z\cal A,\text{ with $\cal A\rightarrow X$ ample.} 
$$ 
In this case the self-intersection number $\cal A^{2}=2(n-1)$,  
with $n\ges 3$,  
and the linear system $|\cal A|$ induces an embedding $S\hra\mbb P^n$.  
 
\item We say that a morphism $\euf{S}\stackrel{\pi}{\rightarrow}\disc$ 
between irreducible algebraic varieties is a family of Picard general, 
polarized $K3$ surfaces, if there is a relatively ample line bundle 
$\mathcal{A}\rightarrow\euf{S}$ such that fibres $(S_{t},\cal A_{t})$,  
$t\in\disc$, are Picard general $K3$ surfaces. In this case the function  
$t\mt\cal A_{t}^{2}$ is constant.  
\end{enumerate} 
\end{definition} 
 
\begin{theorem} 
\begin{enumerate} 
\item Let $\crl K_n$ be the set of Picard general, polarized $K3$ surfaces.  
Then $\crl K_n$ can be endowed with the structure of a smooth stack,  
whose local charts are given by the local Kuranishi models of its points.    
\item For any $g\ges 1$, let $\crl M_g$ be the Deligne-Mumford stack  
of smooth and irreducible curves of genus $g$.  
For $d\ges 1$ and $0\les\d\les (n-1)d^2$, we define\\[1ex]  
$\crl V_{n,\d}^d:= 
\left\{ 
(S,C,u) 
\left| 
\begin{array}{l} 
(S,\cal A)\in\crl K_n,\,C\in\crl M_{n-\d},\text{ and }\,  
u:C\rar S\text{ is a morphism} 
\\  
\text{s.t. }u_*C\hra S  
\text{ is a reduced, nodal curve with }\d\text{ nodes,} 
\\  
\text{which belongs to the linear system }|d\cal A|. 
\end{array} 
\right.\! 
\right\}$\\[1ex] 
Then $\crl V_{n,\d}^d$ can be endowed with the structure of  
an analytic stack, which admits two forgetful morphisms 
\begin{eqnarray}{\label{morph}} 
\xymatrix@C=1.5em@R=2em{ 
&\crl V_{n,\d}^d\ar[dl]_-\mu\ar[dr]^-\kappa& 
\\ 
\crl M_{g(d)-\d}&&\crl K_n 
} 
&\text{with }\;g(d):=1+(n-1)d^2. 
\end{eqnarray} 
\item There is a non-empty open subset ${\crl K_n}^\circ\subset\crl K_n$ such  
that ${({\crl V}_{n,\d}^d)}^\circ:=\kappa^{-1}\bigl({\crl K_n}^\circ\bigr)$ is  
smooth, the projection ${({\crl V}_{n,\d}^d)}^\circ\rar{\crl K_n}^\circ$ is  
submersive, and all the irreducible components of ${({\crl V}_{n,\d}^d)}^\circ$  
are $19+g(d)-\d$ dimensional. 
\end{enumerate} 
\end{theorem}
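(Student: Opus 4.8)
The plan is to treat the three assertions in order, each being a piece of deformation theory. For \textbf{(1)}, I would start from the well-known cohomology of the tangent bundle of a $K3$ surface $S$, namely $h^0(S,T_S)=h^2(S,T_S)=0$ and $h^1(S,T_S)=20$, which makes the local Kuranishi space of $S$ smooth of dimension $20$ (deformations of a $K3$ are unobstructed). Imposing that the polarization class $\cal A$ remain algebraic cuts out a smooth divisor in this chart: the obstruction to extending $\cal A$ sits in $H^2(S,\cal O_S)\cong\mbb C$ and equals the differential of the period map in the direction of $\cal A$, which is non-zero. This yields a smooth $19$-dimensional stack of degree-$2(n-1)$ polarized $K3$ surfaces, and being Picard general, $\Pic(S)=\mbb Z\cal A$, is the complement of the countable union of Noether--Lefschetz divisors, hence an open dense condition; restricting to it gives $\crl K_n$ with local charts the Kuranishi models, as claimed.

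For \textbf{(2)}, I would work over a Kuranishi chart $\euf S\srel{\pi}{\lar}\D$ with relatively ample $\cal A$. Since $d\cal A_t$ is ample, $h^1(S_t,d\cal A_t)=0$ by Kodaira vanishing, so $\pi_*\cal A^{\otimes d}$ is locally free of rank $g(d)+1$ and $\mbb P_\D:=\mbb P\bigl((\pi_*\cal A^{\otimes d})^\vee\bigr)\rar\D$ is a $\mbb P^{g(d)}$-bundle whose fibre over $t$ is $|d\cal A_t|$. Inside $\mbb P_\D$ take the locally closed subscheme $\euf V$ parametrizing members of the universal divisor having exactly $\d$ nodes and no further singularities; over $\euf V$ the scheme of nodes is finite étale of degree $\d$, so the universal curve admits a simultaneous (equinormalizable) normalization $\wtld{\euf C}\rar\euf V$, a flat family of smooth genus $g(d)-\d$ curves, whose classifying morphism lands in $\crl M_{g(d)-\d}$. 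Gluing these local pictures along overlaps of Kuranishi charts (by versality) builds the analytic stack $\crl V_{n,\d}^d$, and the two forgetful maps are read off: $\kappa$ from the structure map $\euf V\rar\D$, and $\mu$ from the classifying map of $\wtld{\euf C}$.

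For \textbf{(3)}, the fibre $\kappa^{-1}\bigl([(S,\cal A)]\bigr)$ is the Severi variety $V_\d(S,d\cal A)$ of $\d$-nodal curves in $|d\cal A|$. At a point $[\hat C]$ with node set $N$ the Zariski tangent space is $H^0\bigl(S,\cal I_N(\hat C)\bigr)/H^0(S,\cal O_S)$ and the obstructions lie in $H^1\bigl(S,\cal I_N(\hat C)\bigr)$; hence wherever that $H^1$ vanishes $V_\d(S,d\cal A)$ is smooth of the expected dimension $\dim|d\cal A|-\d=g(d)-\d$. I would let $\crl K_n^\circ\subset\crl K_n$ be the dense open substack over which (for the relevant $\d$) the Severi variety is non-empty and this vanishing holds. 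Running the same Ext/cohomology computation for deformations of the triple $(S,C,u)$ \emph{relative} to $\crl K_n$ — the relative obstruction group being again a subquotient of $H^1\bigl(S,\cal I_N(\hat C)\bigr)$, since the obstruction to spreading an equisingular deformation over a deformation of $S$ dies once that group vanishes — shows $\kappa$ is submersive over $\crl K_n^\circ$. Combining: ${(\crl V_{n,\d}^d)}^\circ$ is smooth, and as a smooth morphism with fibres of pure dimension $g(d)-\d$ over the smooth $19$-dimensional base $\crl K_n^\circ$, each of its components has dimension $19+g(d)-\d$.

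The single non-formal ingredient is the vanishing of $H^1\bigl(S,\cal I_N(\hat C)\bigr)$ for $S$ in a dense open substack — equivalently, that the $\d$ nodes impose independent conditions on $|d\cal A|$ and the equisingular deformation problem is unobstructed — together with non-emptiness of $V_\d(S,d\cal A)$. This is exactly the statement that Severi varieties of nodal curves on (general) $K3$ surfaces are non-empty and smooth of the expected dimension, for which I would appeal to \cite{mm}, \cite{chen}, and \cite{ta}; everything else above (parts (1) and (2), and the dimension count and submersivity in (3)) then follows formally from this by the cohomological arguments sketched. The main obstacle is therefore to pin down precisely which open condition on $S$ guarantees that vanishing under the node bounds in play, and to verify that the relative obstruction is controlled by the same $H^1$.
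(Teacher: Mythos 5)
Your three-part skeleton matches the paper's, and parts (i) and (ii) end up in essentially the same place: the paper simply cites \cite{bpv} for (i), and for (ii) it realizes $\crl V_{n,\d}^d$ over a family $\euf S\rar\disc$ as an open subscheme of a Kontsevich--Manin space of stable maps $\crl M_{g(d)-\d}(\euf S;\beta)$, rather than, as you do, as the $\d$-nodal locus in the relative linear system $\mbb P\bigl(\pi_*\cal A^{\otimes d}\bigr)$ followed by simultaneous normalization; for reduced nodal members the two constructions carry the same information, since the triple $(S,C,u)$ is recovered from $(S,u_*C)$ by normalizing. The genuine divergence is in (iii). The paper never touches the Severi-variety ideal sheaf $\cal I_{\euf N}(\hat C)$: it takes the sheaf $\cal T_S\langle\hat C\rangle=\Ker\bigl(\cal T_S\rar u_*\cal K_C\bigr)$, identifies $H^1\bigl(S,\cal T_S\langle\hat C\rangle\bigr)$ with the tangent space to $\crl V_{n,\d}^d$ and $H^1(\iota)$ with $\rd\kappa$, and then extracts everything from a pure dimension count in the long exact sequence: $\Img H^1(\iota)$ sits inside the $19$-dimensional $\cal T_{\crl K_n,[S]}$ and equals $\Ker\bigl(H^1(\cal T_S)\rar H^1(C,\cal K_C)\bigr)$, whose dimension is $\ges 20-1=19$; hence $\rd\kappa$ is onto, $H^2\bigl(S,\cal T_S\langle\hat C\rangle\bigr)=0$, and the tangent space has dimension $h^0(C,\cal K_C)+19=19+g(d)-\d$. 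The only non-formal input is Chen's existence theorem, so ${\crl K_n}^\circ$ is just the locus of non-emptiness; no independence-of-conditions statement is needed.

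The step of yours that does not survive being made precise as written is the submersivity of $\kappa$. The obstruction to propagating $\hat C$ equisingularly along a first-order deformation of $S$ is \emph{not} a subquotient of $H^1\bigl(S,\cal I_{\euf N}(\hat C)\bigr)$: from the sequence $0\rar\cal O_S\rar\cal I_{\euf N}(\hat C)\rar N'\rar 0$, with $N'$ the equisingular normal sheaf, one gets $H^1(N')\surj H^2(S,\cal O_S)\cong\mbb C$, because $H^2\bigl(S,\cal I_{\euf N}(\hat C)\bigr)\cong H^0\bigl(S,\cal O_S(-\hat C)\bigr)^\vee=0$; so the relevant obstruction space is non-zero even when $H^1\bigl(S,\cal I_{\euf N}(\hat C)\bigr)=0$. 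That copy of $\mbb C$ is exactly the Noether--Lefschetz obstruction: in a general direction of the full $20$-dimensional Kuranishi space of $S$ the curve admits no lift at all, and the obstruction dies only after restricting to the $19$-dimensional polarized locus $\crl K_n$. You therefore have to build the polarization into the relative deformation problem before asserting that the relative obstruction vanishes; the paper's $20$-versus-$19$-versus-$1$ count in the long exact sequence of $\cal T_S\langle\hat C\rangle$ is precisely the device that accomplishes this. With that repaired (and with \cite{ta} supplying $H^1\bigl(S,\cal I_{\euf N}(\hat C)\bigr)=0$ for irreducible nodal curves on a $K3$, so that smoothness of the fibres requires no genericity beyond existence), your route closes and yields the same conclusion as the paper's.
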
 
Note that for $d=1$ we recover the situation studied in \cite[section 4]{fkps}:  
${\bigl(\crl V_{n,\d}^1\bigr)}^\circ$ coincides with the stack $\crl V_{n,\d}$   
introduced in {\it loc.\,cit.}, definition 4.3. 
 
\begin{proof} 
(i) The detailed construction can be found for instance in  
\cite[chap. VIII, sect. 12]{bpv}.  
 
\nit(ii) The analytic stack structure is obtained  
as follows: for a family $(\euf S,\cal A)\srel{\pi}{\rar}\disc$  
of Picard general $K3$ surfaces, $\crl V_{n,\d}^d(\euf S)$ is  
naturally an open subscheme of the Kontsevich-Manin space of  
stable maps $\crl M_{g(d)-\d}(\euf S;\beta)$, with suitable  
$\beta\in H_2(\euf S;\mbb Z)$ such that $\pi_*\beta=0$.  
 
\nit(iii) The proof is {\it ad litteram} the same as that of  
\cite[proposition 4.8]{fkps}. According to \cite{chen}, there  
is a non-empty open subset  
${\crl K_n}^\circ\subset\crl K_n$ such that  
$$ 
\forall\,(S,\cal A)\in{\crl K_n}^\circ,\; 
\text{ the linear system $|d\cal A|$ contains irreducible,  
nodal curves with $\d$ nodes.} 
$$  
Consider a point $(S,C,u)\in\kappa^{-1}\bigl({\crl K_n}^\circ\bigr)$, and  
denote $\hat C:=u_*C$.  
Then the short exact sequence\\[1ex]  
\centerline{$ 
0\lar \cal T_S\langle\hat C\rangle:=\Ker(\hat\l) 
\srel{\iota}{\lar}\cal T_S\srel{\hat\l}{\lar} 
u_*\bigl(\;\underbrace{ u^*\cal T_S/\cal T_C }_{\cong\;\cal K_C}\;\bigr)\lar 0 
$}\\[1ex] 
induces the long exact sequence in cohomology: 
$$ 
0\rar H^0(C,\cal K_C)\rar H^1(S,\cal T_S\langle\hat C\rangle) 
\srel{H^1(\iota)}{\lar}  
\underbrace{\,H^1(S,\cal T_S)\,}_{\cong\;k^{20}} 
\srel{H^1(\hat \l)}{\lar}  
\underbrace{\,H^1(C,\cal K_C)\,}_{\cong\;k} 
\rar H^2(S,\cal T_S\langle\hat C\rangle)\rar 0 
$$ 
The cohomology group $H^1\bigl(S,\cal T_S\langle\hat C\rangle\bigr)$ is  
naturally isomorphic to the Zariski tangent space $\cal T_{\crl V^d_{n,\d},(S,C,u)}$  
and the homomorphism $H^1(\iota)$ can be identified with the differential  
of $\kappa$ at $(S,C,u)$ (see diagram \eqref{deform} below). Hence:\\[1ex]  
\centerline{$ 
\begin{array}[t]{lll} 
\Img\bigl(H^1(\iota)\bigr)\subset\cal T_{\crl K_n,[S]}& 
\Rightarrow& 
\dim\bigl(\Img\bigl(H^1(\iota)\bigr)\bigr)\les 19, 
\\  
\Img\bigl(H^1(\iota)\bigr)=\Ker\bigl(H^1(\hat \l)\bigr)& 
\Rightarrow& 
\dim\bigl(\Img\bigl(H^1(\iota)\bigr)\bigr)\ges 19. 
\end{array} 
$}\\[1ex]  
We deduce that $\Img(\rd\kappa_{(S,C,u)})= 
\Img\bigl(H^1(\iota)\bigr)=\cal T_{\crl K_n,[S]}$,  
and $H^2(S,\cal T_S\langle\hat C\rangle)=0$.  
\end{proof} 
 
The Zariski tangent space of $\crl V_{n,\d}^d$ is described  
in \cite[section 4]{fkps}.  
Consider a triple $(S,C,u)\in\crl V_{n,\d}^d$, and let  
$\tilde S\srel{\si}{\rar}S$ be the blow-up of $S$ at the  
$\d$ double points of $\hat C=u(C)$.  
Then the morphism $u$ can be lifted to a morphism $\tilde u$ into  
$\tilde S$, which is a closed embedding: 
$$ 
\xymatrix@R=2em@C=4em{ 
&\tilde S\ar[d]^-\si 
\\  
C\ar[ur]^{\tilde u}\ar[r]^u&S. 
} 
$$ 
The infinitesimal deformations of $(S,C,u)$ are controlled by  
the (locally free) sheaf $(\si^*\cal T_S)\langle C\rangle$ defined  
in the diagram below: 
\begin{equation}{\label{deform}} 
\xymatrix@C=1.35em@R=2em{ 
&0\ar[d]&0\ar[d]&&& 
\\  
&(\si^*\cal T_S)(-C)\ar@{=}[r]\ar[d] 
& 
(\si^*\cal T_S)(-C)\ar[d]&&& 
\\  
0\ar[r]& 
(\si^*\cal T_S)\langle C\rangle:=\Ker(\l)\ar[r]\ar[d]^-{r}& 
\si^*\cal T_S\ar[r]^-{\l}\ar[d]& 
\cal N_u\ar[r]\ar[d]& 
0& 
{}\save[]+<6em,-1.7ex>*\txt<14em>{%
$\text{with } 
\cal N_u\!:=\tilde u_*\bigl(\, 
\underbrace{\tilde u^*\cal T_S/\cal T_C}_{\cong\;\cal K_C} 
\,\bigr).$} 
\restore 
\\  
0\ar[r]& 
\cal T_C\ar[r]\ar[d]& 
u^*\cal T_S\ar[r]\ar[d]& 
\cal N_u\ar[r]\ar[d]& 
0&\\  
&0&0&0&& 
} 
\end{equation} 
More precisely, the Zariski tangent space to $\crl V_{n,\d}^d$  
at $(S,C,u)$ is isomorphic to  
$$ 
H^1(\tilde S,(\si^*\cal T_S)\langle C\rangle) 
\cong H^1(S,\cal T_S\langle\hat C\rangle). 
$$ 
We are finally in position to precise the topic of this article.  
The first issue is: 
 
\begin{question}{\label{issue1}} 
Is the morphism $\crl V_{n,\d}^d\srel{\mu}{\lar}\!\crl M_{g(d)-\d}$  
(generically) finite?  
\end{question} 
 
The second issue is the following: one can easily see that   
$\,\dim\bigl(\crl V_{n,\d}^d\bigr)<\dim\bigl(\crl M_{g(d)-\d}\bigr)\,$  
for $n$ sufficiently large.  
\begin{question}{\label{issue2}} 
Is there any obstruction for a point $C\in\crl M_{g(d)-\d}$ to lay in  
the image of $\mu$?  
\end{question}

 
\section{A vanishing result}{\label{sct:0}} 
 
In this section we prove the main technical ingredient needed for  
our approach to the question \ref{issue1}. It is an application of  
Bogomolov's effective restriction theorem for stable vector bundles  
over surfaces (see \cite[section 7.3]{hl}). 
 
\begin{proposition}{\label{tech}} 
Let $(S,\cal A)$ be a Picard general $K3$ surface,  
with $\cal A^2=2(n-1)$, and let $\cal E\rar S$ be a stable vector bundle  
of rank $r\ges 2$, and $c_1(\cal E)=0$.  
Let $\hat C\srel{\jmath}{\hra} S$ be a reduced and irreducible,  
nodal curve having $\d$ double points, with $\hat C\in|d\cal A|$.  
We denote by $C\srel{\nu}{\rar}\hat C$ its normalization, and   
by $C\xrar{u:=\jmath\circ\nu} S$ the composed morphism.  
 
Suppose that one of the conditions below are satisfied:  
\begin{enumerate} 
\item $d=1$ or $r=d=2$, and  
$\d\les\bigl\lfloor\frac{(r-1)(n-1)d^2-1}{r}\bigr\rfloor 
-c_2(\cal E)$, or 
\item $r\ges 2$ and $d\ges 2$, with $(r,d)\neq(2,2)$,  
and $\d\les 2(n-1)d-c_2(\cal E)- 
\bigl\lfloor\frac{r(n-1)+1}{r-1}\bigr\rfloor$.  
\end{enumerate} 
Then $H^0(C,u^*\cal E^\vee)=0$.  
\end{proposition}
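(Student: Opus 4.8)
The plan is to reduce the claimed vanishing to the $\mu$-stability of the restricted bundle $u^*\cal E$ on $C$, and then to obtain that stability from Bogomolov's effective restriction theorem applied on a blow-up of $S$. For the reduction: since $c_1(\cal E)=0$ we have $\deg(u^*\cal E)=c_1(\cal E)\cdot\hat C=0$, so $u^*\cal E$ has slope $0$; a nonzero section of $u^*\cal E^\vee$ transposes to a nonzero morphism $u^*\cal E\to\cal O_C$ with image $\cal O_C(-D')$ for an effective divisor $D'\ges 0$, whose kernel $\cal K$ has rank $r-1$ and degree $\deg D'\ges 0$, hence slope $\mu(\cal K)\ges 0=\mu(u^*\cal E)$ --- impossible if $u^*\cal E$ is $\mu$-stable. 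So it suffices to prove that $u^*\cal E=(\si^*\cal E)|_{\tilde C}$ is $\mu$-stable, where $\si\colon\tilde S\to S$ is the blow-up of $S$ at the $\d$ nodes of $\hat C$, $\tilde C=\tilde u(C)\subset\tilde S$ is the strict transform of $\hat C$, and $[\tilde C]=d\,\si^*\cal A-2E$ with $E=E_1+\dots+E_\d$ the exceptional divisor.

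Then I would set up the blown-up surface. Because $\Pic(S)=\mathbb Z\cal A$, every subsheaf $\cal G$ of $\si^*\cal E$ of positive rank $<r$ pushes down to a subsheaf $\si_*\cal G$ of the stable bundle $\cal E$ of the same rank, whence $c_1(\si_*\cal G)=m\cal A$ with $m\les -1$; since $c_1(\cal G)-\si^*c_1(\si_*\cal G)$ is supported on $E$ and $\si^*\cal A\cdot E_i=0$, this forces the $\si^*\cal A$-slope of $\cal G$ to be $\les -2(n-1)/(r-1)<0$. Hence $\si^*\cal E$ is $\mu$-stable on $\tilde S$ with respect to $\si^*\cal A$ (and, with suitable care, with respect to a small ample perturbation $H_\epsilon:=\si^*\cal A-\epsilon E$), with a uniform slope gap. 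I would also record the intersection numbers $\Delta(\si^*\cal E)=\Delta(\cal E)=2r\,c_2(\cal E)$, $\tilde C^2=2(n-1)d^2-4\d$, $\tilde C\cdot\si^*\cal A=2(n-1)d$ and $\tilde C\cdot E=2\d$.

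The core step is to invoke Bogomolov's effective restriction theorem \cite[section~7.3]{hl}: applied to the $\mu$-stable bundle $\si^*\cal E$ on $\tilde S$ and the smooth irreducible curve $\tilde C$, it yields that $(\si^*\cal E)|_{\tilde C}$ is $\mu$-stable as soon as an explicit inequality between $r$, $\Delta(\cal E)=2r\,c_2(\cal E)$ and the above intersection numbers holds. The case distinction mirrors the two ways of meeting that inequality. When $d=1$ or $r=d=2$, the class $\tilde C$ is a positive multiple of the integral ample divisor $\si^*\cal A-\tfrac{2}{d}E$, and the sharp form of the theorem applies; the resulting condition is $r\bigl(c_2(\cal E)+\d\bigr)<(r-1)(n-1)d^2$, i.e.\ $\d\les\bigl\lfloor\tfrac{(r-1)(n-1)d^{2}-1}{r}\bigr\rfloor-c_2(\cal E)$. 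For the remaining $(r,d)$, where $\tilde C$ is no longer proportional to an integral ample divisor, one instead uses a coarser estimate that controls only the $\cal A$-degree $\tilde C\cdot\si^*\cal A=2(n-1)d$ while absorbing the exceptional contribution $\tilde C\cdot E=2\d$; this gives the weaker bound $\d\les2(n-1)d-c_2(\cal E)-\bigl\lfloor\tfrac{r(n-1)+1}{r-1}\bigr\rfloor$. These are exactly the bounds in (i) and (ii).

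The hard part will be twofold. First, turning the $\mu_{\si^*\cal A}$-stability of $\si^*\cal E$ into a form that can be fed to \cite[section~7.3]{hl} on the non-minimal surface $\tilde S$: one must control subsheaves whose first Chern class is twisted along the exceptional curves --- or, more economically, check that only the elementary modifications along $\tilde C$ (for which that twist vanishes) enter the restriction argument --- and then verify the numerical hypothesis; the essential point is that $\Pic(S)=\mathbb Z\cal A$ yields the uniform gap between the $\cal A$-slopes of proper subsheaves of $\cal E$ and $\mu(\cal E)=0$. Second, there is the arithmetic of matching that hypothesis to the two displayed bounds --- the floor terms and the borderline value $r=d=2$ --- and of checking that the two regimes together cover all pairs $(n,d)$ in the statement; and, since $\tilde C$ is a fixed curve rather than a general member of a linear system, one needs the version of Bogomolov's theorem valid for every smooth curve, not merely the generic restriction theorem.
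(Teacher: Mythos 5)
Your toolbox is the right one and your opening reduction (a nonzero section of $u^*\cal E^\vee$ contradicts $\mu$-stability of the degree-zero bundle $u^*\cal E$) is sound, but the central step of your plan is exactly the point that fails, and the idea that replaces it in the paper is absent from your proposal. The paper never passes to $\tS$ and never invokes the restriction theorem as a black box for the smooth curve $\tilde C$. It stays on $S$: the section gives an epimorphism $u^*\cal E\to Q=\cal O_C(-\Delta)$ with $\deg_CQ\les0$; one pushes $Q$ forward along the normalization and takes the image $\hat Q=\Img(\nu_*q)$, a rank-one torsion-free quotient of $\cal E\otimes\cal O_{\hat C}$ with $\deg_{\hat C}\hat Q=\deg_CQ+\d'$ for some $0\les\d'\les\d$ (this is precisely where the nodes enter the numerics); then $\cal G:=\Ker(\cal E\to\jmath_*\hat Q)$ is a rank-$r$ elementary modification on $S$ with $c_1(\cal G)=-d\cal A$ and $c_2(\cal G)\les c_2(\cal E)+\d$, so the hypotheses give $\Delta(\cal G)<0$, and \cite[theorem 7.3.4]{hl} produces $\cal G'\subset\cal G\subset\cal E$ with $\xi_{\cal G',\cal G}>0$ and $\xi_{\cal G',\cal G}^2\ges-\Delta(\cal G)/r^2(r-1)$. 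Stability of $\cal E$ together with $\Pic(S)=\mbb Z\cal A$ forces $c_1(\cal G')=-m\cal A$ with $m\ges1$; the first inequality alone is contradictory when $d=1$ or $r=d=2$ (it forces $\frac{r+1}{r-1}\les d$), and otherwise the second inequality combined with the Hodge index theorem yields $\d\ges2(n-1)d-c_2(\cal E)-\frac{r(n-1)}{r-1}$, again a contradiction. That, and not ampleness of $\si^*\cal A-\frac{2}{d}E$, is the actual source of the two cases in the statement.

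Concretely, the step of yours that breaks is the appeal to the effective restriction theorem on $\tS$. That theorem requires the curve to lie in a multiple of an ample class, and your candidate $H=\si^*\cal A-\frac{2}{d}E$ need not be ample in the allowed range of $\d$: for $d=1$ one has $H^2=H\cdot\tilde C=2(n-1)-4\d$, while hypothesis (i) permits $\d$ up to roughly $\frac{r-1}{r}(n-1)-c_2(\cal E)$, which exceeds $(n-1)/2$ for $r\ges3$ and moderate $c_2(\cal E)$, so $H$ can fail even to have positive square. Even where $H^2>0$, ampleness would require checking all curves on the non-minimal surface $\tS$, and $\mu$-stability of $\si^*\cal E$ with respect to $H$ does not follow from its stability with respect to the degenerate class $\si^*\cal A$ without a perturbation argument, which is unavailable here since $2/d$ is not small. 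You flag all of this as ``the hard part'' but offer no substitute; the substitute is the pushforward/elementary-modification construction described above, which avoids the blow-up entirely and is where the bound on $\d$ actually gets used.
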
 
 
\begin{proof} 
Suppose that there is a non-zero section  
$\cal O_C\srel{s}{\rar}u^*\cal E^\vee$. Then there is an effective  
divisor $\Delta$ on $C$, such that $s$ extends to a monomorphism  
of vector bundles $\cal O_C(\Delta)\rar u^*\cal E^\vee$;  
equivalently, we obtain an epimorphism of vector bundles\\[1ex] 
\centerline{  
$u^*\cal E\srel{q}{\lar}Q:=\cal O_C(-\Delta),\;$ with $\deg_CQ\les 0$. 
}\\[1ex]  
The direct image $\nu_*Q\rar\hat C$ is a torsion free sheaf of rank  
one. We denote by $\hat Q:=\Img(\nu_*q)$ appearing in the diagram\quad  
$\xymatrix{ 
\cal E\otimes\cal O_{\hat C}\ar[r]\ar@/_3ex/[rr]_-{\nu_* q} 
& 
\cal E\otimes \nu_*\cal O_C=\nu_*(u^*\cal E)\ar[r] 
& 
\nu_*Q. 
}$\\  
One can prove that $\hat Q\rar\hat C$ is still a torsion free sheaf  
of rank one, and there is $0\les\d'\les\d$ such that  
\begin{eqnarray*} 
\deg_{\hat C}\hat Q=\deg_C Q+\d'; 
\;\;\text{equivalently,}\;\;  
\chi(\hat Q)=\d'+\deg_C Q-(n-1)d^2. 
\end{eqnarray*} 
One obtains a natural epimorphism of sheaves  
$\veps:\cal E\rar \jmath_*\hat Q$ as follows: 
$$ 
\xymatrix@C=3em{ 
\cal E\ar@{->>}[r]\ar@/_3ex/[rr]_-\veps 
& 
\cal E\otimes\cal O_{\hat C}\ar@{->>}[r] 
& 
\jmath_*\hat Q 
} 
$$ 
We denote by $\cal G:=\Ker(\veps)$; it is a locally free sheaf (a vector  
bundle) of rank $r$ over $S$.  
Using \cite[proposition 5.2.2]{hl}, we compute its numerical invariants:  
\begin{eqnarray*} 
c_1(\cal G)=-[\hat C]=-dA,\quad 
\begin{array}[t]{ll} 
c_2(\cal G)&=c_2(\cal E)+(n-1)d^2+\chi(\hat Q)\\  
&=c_2(\cal E)+\d'+\deg_CQ\les c_2(\cal E)+\d, 
\end{array} 
\\  
\begin{array}[t]{rl} 
\Delta(\cal G):= 
2rc_2(\cal G)-(r-1)c_1^2(\cal G) 
&= 
2r\cdot\left[ 
c_2(\cal G)-\frac{r-1}{r}\cdot(n-1)d^2 
\right] 
\\[1ex]  
&\les  
2r\cdot\left[ 
c_2(\cal E)+\d-\frac{r-1}{r}\cdot(n-1)d^2 
\right]. 
\end{array} 
\end{eqnarray*}  
The hypothesis implies that $\Delta(\cal G)<0$. Therefore  
\cite[theorem 7.3.4]{hl} implies the existence of a subsheaf  
$\cal G'\subset\cal G$ of rank $r'$, with torsion free quotient,  
such that:  
\begin{equation}{\label{xi}} 
\xi_{\cal G',\cal G}:= 
\frac{c_1(\cal G')}{r'}-\frac{c_1(\cal G)}{r}>0, 
\quad\text{and}\quad 
\xi_{\cal G',\cal G}^2\ges \frac{-\Delta(\cal G)}{r^2(r-1)}. 
\end{equation} 
The sheaf $\cal G'$ is also contained in $\cal E$, which is stable  
by hypothesis, hence  $c_1(G')\cdot\cal A<0$.  
Since $S$ is a Picard general $K3$ surface,  
it follows that $c_1(\cal G')=-mA$ with $m\ges 1$.   
 
Further, the inequality $\xi_{\cal G',\cal G}>0$ implies  
$$ 
0<r'd-rm\;\;\Rightarrow\;\; 
1\les (r-1)d-rm\;\;\Rightarrow\;\; 
m\les\frac{(r-1)d-1}{r}. 
$$  
In particular $\frac{r+1}{r-1}\les d$.  
For $d=1$ and for $r=d=2$, this gives already a contradiction, coming  
from the assumption that $u^*\cal E^\vee\rar C$ has non-zero sections.  
 
In higher degrees, we must go further a little bit, and use the  
second inequality in \eqref{xi}:\\[1ex]  
\centerline{$ 
\begin{array}{ll} 
&\disp 
\left(\frac{d}{r}-\frac{1}{r-1}\right)^2\cdot 2(n-1) 
\ges  
\left(\frac{d}{r}-\frac{m}{r'}\right)^2\cdot 2(n-1) 
\ges 
\frac{2(r-1)(n-1)d^2-r(c_2(\cal E)+\d)}{r^2(r-1)} 
\\[2ex] 
\Rightarrow&\disp 
\d\ges2(n-1)d-c_2(\cal E)-\frac{r(n-1)}{r-1}. 
\end{array}  
$}\\[1ex]  
This inequality contradicts again our hypothesis.  
\end{proof} 
 
\begin{remark} 
Let $\cal E\rar S$ be as above, and suppose that $r=2$. Then the  
proof of the proposition shows that actually $u^*\cal E\rar C$ is a  
stable vector bundle, since we have used only that $\deg_CQ\les 0$.  
 
For $r\ges 3$, by applying the result to the exterior powers  
$\overset{\rho}{\bigwedge}\,\cal E$, $\rho=1,\ldots,r-1$  
(which are still stable), it follows that $u^*\cal E^\vee\rar C$  
is a stable vector bundle itself, as soon as the number $\d$ of  
nodes is small enough.  
However, the formula for the upper bound of the number of nodes  
is lengthy, and we did not include it here.  
\end{remark}

The case when $\cal E$ is the tangent bundle $\cal T_S$ of $S$ plays  
a privileged role for proving the rigidity of nodal curves on $K3$  
surfaces. In this case, the previous theorem becomes:  
  
\begin{corollary}{\label{cor:tang}} 
Suppose that $S$ and $u:C\rar S$ are as in proposition \ref{tech}.  
If either  
\begin{enumerate} 
\item $d=1$ and  
$\d\les\d_\mx(n,1):=\bigl\lfloor\frac{n}{2}\bigr\rfloor-25\;$  
{\rm(}hence $\,n\ges50\,${\rm)}, or\smallskip 
  
\item $d=2$ and $\d\les\d_\mx(n,2) := 2n-27\;$  
{\rm(}hence $\,n\ges 14\,${\rm)}, or\smallskip 
  
\item $d\ges 3$ and $\d\les\d_\mx(n,d) := 2(n-1)(d-1)-25\;$  
{\rm(}hence $\,(n-1)(d-1)\ges 13\,${\rm)}, 
\end{enumerate} 
then $u^*\cal T_S\rar C$ is a stable, rank two bundle, and therefore  
$H^0(C,u^*\cal T_S)=0$.  
\end{corollary}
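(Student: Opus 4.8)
The plan is to deduce the corollary as a direct specialization of Proposition~\ref{tech} applied to the tangent bundle $\cal E=\cal T_S$. First I would recall the relevant numerical data: since $S$ is a $K3$ surface, $\cal T_S$ is a rank-two bundle with $c_1(\cal T_S)=0$ (as $K_S\cong\cal O_S$) and $c_2(\cal T_S)=\chi_{\mathrm{top}}(S)=24$. The stability of $\cal T_S$ with respect to the polarization $\cal A$ is classical (the tangent bundle of any $K3$ is $\mu$-stable for every polarization), so all hypotheses of Proposition~\ref{tech} on $\cal E$ are met with $r=2$.

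Next I would plug $r=2$ and $c_2(\cal E)=24$ into the two numerical conditions of the proposition. In case (i) of the proposition one has $d=1$ or $r=d=2$, and the bound $\d\le\bigl\lfloor\frac{(r-1)(n-1)d^2-1}{r}\bigr\rfloor-c_2(\cal E)$ becomes, for $d=1$, $\d\le\bigl\lfloor\frac{n-2}{2}\bigr\rfloor-24=\bigl\lfloor\frac{n}{2}\bigr\rfloor-25$, and for $d=2$, $\d\le\bigl\lfloor\frac{4(n-1)-1}{2}\bigr\rfloor-24=2n-3-24=2n-27$; the positivity of these bounds forces $n\ge50$ and $n\ge14$ respectively. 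In case (ii) of the proposition, with $r=2$, $d\ge3$, the bound $\d\le 2(n-1)d-c_2(\cal E)-\bigl\lfloor\frac{r(n-1)+1}{r-1}\bigr\rfloor$ becomes $\d\le 2(n-1)d-24-(2(n-1)+1)=2(n-1)(d-1)-25$, and positivity gives $(n-1)(d-1)\ge13$. Thus each of the three hypotheses of the corollary is exactly the corresponding hypothesis of the proposition specialized to $\cal T_S$.

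Having checked that the numerical inputs match, I would invoke Proposition~\ref{tech} to conclude $H^0(C,u^*\cal T_S^\vee)=0$, and then use the remark immediately following the proposition: for $r=2$ the argument in fact shows $u^*\cal T_S\to C$ is a stable rank-two bundle (the proof there used only $\deg_C Q\le0$, which is what one gets from any nonzero section of either $u^*\cal T_S^\vee$ or, by self-duality of a rank-two bundle up to a line bundle twist, $u^*\cal T_S$). Finally, since $u^*\cal T_S$ has $c_1=u^*c_1(\cal T_S)=0$, any nonzero section $\cal O_C\to u^*\cal T_S$ would be a nonzero map from a degree-zero line bundle into a stable bundle of slope zero, which is impossible unless it is an isomorphism onto a sub-line-bundle of slope zero—contradicting stability; hence $H^0(C,u^*\cal T_S)=0$ as well.

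The only place requiring genuine care, rather than bookkeeping, is verifying that the self-duality trick legitimately lets one run the proposition's argument for $u^*\cal T_S$ itself (not just its dual): one should note that for a rank-two bundle $\cal F$ on a $K3$ one has $\cal T_S^\vee\cong\cal T_S$ (since $\det\cal T_S\cong\cal O_S$), so pulling back, $u^*\cal T_S^\vee\cong u^*\cal T_S$, and the vanishing of $H^0$ of one is the vanishing of $H^0$ of the other; the stability claim then also transfers verbatim. Everything else is the substitution $r=2$, $c_2=24$ into the displayed inequalities and reading off the resulting constraints on $n$ and $d$, which I would present compactly in a short case-by-case list rather than as separate computations.
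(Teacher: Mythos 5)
Your proposal is correct and is exactly the derivation the paper intends: the corollary is stated as an immediate specialization of Proposition~\ref{tech} to $\cal E=\cal T_S$ (with $r=2$, $c_1=0$, $c_2=24$), and your substitutions reproduce the three bounds $\bigl\lfloor\frac{n}{2}\bigr\rfloor-25$, $2n-27$, and $2(n-1)(d-1)-25$ together with the positivity constraints on $n$ and $d$. The appeal to the remark following the proposition for the stability of $u^*\cal T_S$, and to the self-duality $\cal T_S^\vee\cong\cal T_S$ coming from $\det\cal T_S\cong\cal O_S$ to pass from $H^0(C,u^*\cal T_S^\vee)=0$ to $H^0(C,u^*\cal T_S)=0$, matches the paper's (implicit) argument.
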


\begin{remark}{\label{rmk:bounds}}  
Notice that for degree one, nodal curves on Picard general $K3$  
surfaces, the upper bound $\d_\mx(n,1)$  
appearing in the proposition above basically equals half of the  
arithmetic genus of the hyperplane section. 
 
In this case $\bigl\lfloor\frac{n}{2}\bigr\rfloor-25$ must be  
positive, and therefore the rigidity result holds for $n\ges 50$.  
This bound is weaker than the (optimal) bound $n\ges 13$ obtained  
by Mukai in \cite{mu}. 
\end{remark} 
 
The upper bounds on the number of nodes obtained in corollary  
\ref{cor:tang} are unlikely to be optimal.  
We are unable to address the following:  
 
\begin{question}{\label{issue:h0}} 
Suppose that $S$, and $u:C\rar S$ are as above, and that the genus of  
$C$ is at least two. Is it true that $u^*\cal T_S\rar C$ has no section? 
\end{question} 
 
A positive answer would allow to extend the rigidity results obtained  
in section \ref{sect:rigid}.


\section{The rigidity result}{\label{sect:rigid}} 
 
In this section we are going to give a (partial) positive answer  
to the question \ref{issue1}.  
 
\begin{theorem}{\label{thm:rigid}} 
The morphism $\mu:\crl V_{n,\d}^d\rar\crl M_{g(d)-\d}$ is generically  
finite onto its image for all triples $(d,n,\d)$ satisfying $n\ges 13$  
and $\d\les\d_\mx(n,d)$, with $\d_\mx(n,d)$ as in corollary \ref{cor:tang}. 
\end{theorem}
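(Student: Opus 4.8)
The plan is to show that the differential $\rd\mu_{(S,C,u)}$ is injective at a general point $(S,C,u)\in\crl V_{n,\d}^d$, which by generic smoothness (part (iii) of the first Theorem) implies generic finiteness onto the image. The key is to interpret $\Ker(\rd\mu_{(S,C,u)})$ inside the deformation complex of the triple. A deformation of $(S,C,u)$ that leaves the class of $C$ in $\crl M_{g(d)-\d}$ fixed is, by the top row of diagram \eqref{deform}, a class in $H^1(\tilde S,(\si^*\cal T_S)\langle C\rangle)$ mapping to zero in $H^1(C,\cal T_C)$; equivalently, using the vertical short exact sequence $0\to(\si^*\cal T_S)(-C)\to(\si^*\cal T_S)\langle C\rangle\srel{r}{\to}\cal T_C\to 0$, such a class lifts to $H^1(\tilde S,(\si^*\cal T_S)(-C))$. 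Hence I would reduce the statement to the vanishing
$$
H^1\bigl(\tilde S,(\si^*\cal T_S)(-C)\bigr)=0,
$$
or at least to showing that the image of $H^1((\si^*\cal T_S)(-C))$ in $H^1((\si^*\cal T_S)\langle C\rangle)$ is zero, so that $r_*$ is injective on $H^1$.

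First I would rewrite $(\si^*\cal T_S)(-C)$ in terms of data on $S$ and on $C$. Pushing forward by $\si$ and $u$, Serre duality on $\tilde S$ (which has trivial canonical bundle, being a blow-up of a $K3$ at the $\d$ nodes — here one must instead use $K_{\tilde S}=\si^*K_S+\sum E_i$, so $K_{\tilde S}$ is the sum of the exceptional divisors) gives
$$
H^1\bigl(\tilde S,(\si^*\cal T_S)(-C)\bigr)^\vee\cong H^1\bigl(\tilde S,(\si^*\cal T_S^\vee)(C)\otimes K_{\tilde S}\bigr)=H^1\bigl(\tilde S,(\si^*\O_S^1)(C+\textstyle\sum E_i)\bigr),
$$
using $\cal T_S\cong\O_S^1$ on a $K3$. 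The divisor $C+\sum E_i=\si^*\hat C-\sum E_i$ restricted suitably relates $\tilde S$-cohomology to cohomology on $C=\tilde u(C)\hra\tilde S$. Via the adjunction/restriction sequence for the embedding $\tilde u:C\hra\tilde S$, the obstruction to this vanishing is governed by $H^0(C,u^*\cal T_S)$: indeed the relevant extension class or connecting map lands in a group that is zero precisely when $u^*\cal T_S$ has no sections. This is exactly where Corollary \ref{cor:tang} enters: under $n\ges 13$ and $\d\les\d_\mx(n,d)$, the bundle $u^*\cal T_S\to C$ is stable of rank two with $c_1=0$, hence $H^0(C,u^*\cal T_S)=0$, and its dual $u^*\O_S^1$ likewise has no sections.

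The main obstacle, and the step requiring care, is the bookkeeping on the blown-up surface $\tilde S$: the exceptional curves $E_i$ contribute to $K_{\tilde S}$ and to the restriction sequences, so the clean "$K3$ with trivial canonical bundle" duality used in the smooth case must be adjusted, and one has to check that the corrections by $\sum E_i$ do not introduce extra cohomology (each $E_i\cong\mbb P^1$ with $E_i^2=-1$, and the line bundles restricted to $E_i$ will have controlled degree, so their higher cohomology vanishes). Concretely I would: (1) set up diagram \eqref{deform} and extract that $\Ker(\rd\mu)$ injects into $H^1(\tilde S,(\si^*\cal T_S)(-C))$; (2) dualize and rewrite this as a cohomology group built from $u^*\cal T_S$ on $C$ plus exceptional-divisor terms; (3) invoke Corollary \ref{cor:tang} to kill the $C$-part, and a direct Riemann–Roch/vanishing check on the $\mbb P^1$'s to kill the exceptional part; (4) conclude $\rd\mu_{(S,C,u)}$ is injective on a dense open set, hence $\mu$ is generically finite onto its image. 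The hypothesis $n\ges 13$ is needed both to make $\d_\mx(n,d)$ meaningful in all three degree ranges and to ensure the $K3$ is Picard general with the linear system $|d\cal A|$ containing the requisite nodal curves (part (iii) of the first Theorem).
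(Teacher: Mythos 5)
Your reduction of generic finiteness to the injectivity of $\rd\mu_{(S,C,u)}=H^1(r)$ at a general point, and hence (since $H^0(C,\cal T_C)=0$) to the vanishing $H^1\bigl(\tilde S,(\si^*\cal T_S)(-C)\bigr)=0$, is correct: this vanishing is exactly what the paper records \emph{afterwards}, as a corollary deduced from the theorem. The gap is in how you propose to prove it. The restriction sequence for $\tilde u:C\hra\tilde S$ is $0\to(\si^*\cal T_S)(-C)\to\si^*\cal T_S\to u^*\cal T_S\to 0$, and Corollary \ref{cor:tang} (that is, $H^0(C,u^*\cal T_S)=0$) only yields an \emph{injection} $H^1\bigl(\tilde S,(\si^*\cal T_S)(-C)\bigr)\hra H^1(\tilde S,\si^*\cal T_S)\cong H^1(S,\cal T_S)\cong\mbb C^{20}$, not the vanishing. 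The residual $20$-dimensional space is the deformation space of the $K3$ surface itself; it has nothing to do with the exceptional curves, so no Riemann--Roch check on the $E_i$'s will remove it. Equivalently, after your Serre-duality translation the needed vanishing reads $H^1\bigl(S,\Omega^1_S(\hat C)\otimes\cal I_{\euf N}\bigr)=0$, i.e.\ the assertion that the $\d$ nodes impose independent conditions on $H^0\bigl(S,\Omega^1_S(\hat C)\bigr)$ --- a genuinely new statement about the position of the nodes which does not follow from the stability of $u^*\cal T_S$, and which your step (3) does not address.

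The paper circumvents precisely this difficulty by not arguing infinitesimally: it takes an actual one-parameter family $U:\disc\times C\to\euf S$ with $\mu\circ U$ constant, uses $H^0(C,u_t^*\cal T_{S_t})=0$ to show that $U_*(\del/\del t)$ has no vertical component (this part matches your use of Corollary \ref{cor:tang}), then --- and this is the step missing from your proposal --- integrates this in local coordinates to conclude that the maps $u_t$ identify the same pairs of points for all $t$, so all the image curves $\hat C_t$ are abstractly isomorphic nodal curves; finally it kills the resulting deformation of the pair $(S,\hat C)$ using Mukai's vanishing $H^1(S,\cal T_S(-\hat C))=0$, valid for $\cal A^2=2(n-1)\ges 24$, which is the actual role of the hypothesis $n\ges 13$ (your proposal never invokes this vanishing or any substitute for it). The author's remark after the proof states explicitly that this middle step ``is not proved at the tangential level''; a purely tangential argument such as yours would have to supply an independent proof of $H^1\bigl(S,\Omega^1_S(\hat C)\otimes\cal I_{\euf N}\bigr)=0$, which it does not.
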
 
 
\begin{proof}  
We must prove that for any smooth, quasi-projective curve $\disc$,  
and for any morphism $\disc\srel{U}{\rar}\crl V_{n,\d}^d$ such that  
$\mu\circ U:\disc\rar\crl M_{g(d)-\d}$ is constant, the morphism  
$U$ is constant itself.  
Such a morphism $U$ is equivalent to the following data:  
 
-- a smooth and irreducible curve $C$ of genus $g:=g(d)-\d$;  
 
-- a smooth family $({\euf S,\cal A})\overset{\pi}{\rightarrow}\disc$  
of Picard general $K3$ surfaces;  
 
-- a family of morphisms over the $1$-dimensional base $\disc$ 
\begin{equation}{\label{eqn:CX}} 
\xymatrix@R=1.5em{ 
\mathcal C:=\disc\times C 
\ar[rr]^-{U={(u_t)}_{t\in\disc}}\ar[rd]_-{\pr_\disc}&& 
\euf S={(S_t)}_{t\in\disc}\ar[ld]^-\pi 
\\ 
&\disc& 
} 
\end{equation} 
such that $\hat{C}_{t}:=u_{t}(C)\hookrightarrow S_{t}$ are nodal curves,  
with $\d$ ordinary double points, and $\hat C_t\in|d\cal A_t|$ for all  
$t\in\disc$.  
We must prove that, up to isomorphism, $S_t$ and $u_t$ are independent  
of $t\in\disc$.\smallskip   
 
\nit\unbar{Step 1}\quad First of all, note that we may assume that  
$\cal T_\disc\rar\disc$ is trivializable. Otherwise we cover $\disc$  
with trivializable open subsets. We denote by $\del/\del t$ a trivializing   
section of $\cal T_\disc$.  
 
The differentials of the various morphisms in \eqref{eqn:CX} fit into  
the diagram:  
\begin{equation}{\label{exact-sqn}} 
\xymatrix@R=2em{ 
0\ar[r]& 
\pr_C^*\cal T_C\ar[r]\ar[d]^-{U_*}& 
\cal T_{\disc\times C}=\pr_C^*\cal T_C\oplus\pr_\disc^*\cal T_\disc 
\ar[r]\ar[d]^-{U_*}& 
\pr_\disc^*\cal T_\disc\cong 
\cal O_{\disc\times C}\ar[r]\ar@{=}[d]\ar@{-->}[dl]|-{\;\;s\;\;}&0 
\\  
0\ar[r]& 
U^*\cal T_{\euf S/\disc}\ar[r]& 
U^*\cal T_{\euf S}\ar[r]^{\phantom{xxxxx}\pi_*}& 
\cal O_{\disc\times C}\ar[r]&0 
} 
\end{equation} 
We observe that $s:=U_*\left(\del/\del t\right)$  
is a section of $U^*\cal T_{\euf S}\rar\disc\times C$;  
let $s_t:=s|_{\{t\}\times C}\in H^0(C,u_t^*\cal T_{\euf S})$.  
 
The diagram \eqref{eqn:CX} commutes, and therefore the tangential map  
$\pi_{*}:T\mathcal{S}|_{S_{t}}\rightarrow T_{t}\disc$ sends $s$ into  
the trivializing section $\del/\del t\in H^0(\disc,\cal T_{\disc})$.  
It follows that the second row in \eqref{exact-sqn} is split, that is  
$$ 
U^*\cal T_{\euf S}\cong  
\cal O_{\disc\times C}\oplus U^*\cal T_{\euf S/\disc}. 
$$  
With respect to this splitting $s=(s_0,\bar{s})$, where  
$\bar s\in H^{0}(\disc\times C,U^{*}\cal T_{\euf S/\disc})$.  
By hypothesis $\hat C_t\hra S_t$ are nodal curves for all $t\in\disc$.  
Therefore corollary \ref{cor:tang} implies that  
$u_t^{*}\cal T_{S_t}\rightarrow C$ has no non-trivial sections,  
that is $\bar s|_{\{t\}\times C}=0$ for all $t$.  
We deduce that $\bar s=0$,  or intrinsically  
\begin{equation}{\label{udt}} 
U_*\left(\del/\del t\right)=(s_0,0).  
\end{equation} 
 
\nit\unbar{Step 2}\quad We interpret the result in locally on $\euf S$:  
there are local coordinates $(t,z,w)$ on $\euf S$ such that the morphism  
$U$ is given by\\[1ex]  
\centerline{$ 
U(t,x)=\bigl(t,\,\underbrace{z(t,x),w(t,x)}_{=\,u_t(x)}\,\bigr), 
\;\forall\, t\in\disc,\, x\in C. 
$}\\   
The equality \eqref{udt} becomes:  
$$ 
\rd z_{(t,x)}(\del/\del t)=0  
\quad\text{and}\quad 
\rd w_{(t,x)}(\del/\del t)=0, 
\quad\forall\,(t,x)\in\disc\times C.  
$$ 
It follows that $z(t,x)=z(x)$ and $w(t,x)=w(x)$, meaning that the `vertical'  
component $u_t(x)$ of $U$ is independent of the parameter $t$.  
 
Consider an arbitrary $t_0\in\disc$.  
Suppose that $\hat{x}\in\hat{C}_{t_0}$ is a double point, and let 
$x_{1},x_{2}\in C_{t_0}$ be the corresponding pair of points identified  
by the normalization map $C_{t_0}\srel{u_0}{\rar}\hat C_{t_0}$.  
Then for all $t\in\disc$ holds 
$$ 
u_{t}(x_{1})=\bigl(z(t,x_1),w(t,x_2)\bigr) 
=\bigl(z(t_0,x_1),w(t_0,x_2)\bigr) 
=u_{0}(x_{1})=u_{0}(x_{2})=\ldots=u_{t}(x_{2}),  
$$ 
that is the morphism $u_{t}$ will identify the same pairs of points  
of $C$. Since $t$ was arbitrary, we conclude that the curves  
$\hat{C}_{t}:=u_{t}(C)\hookrightarrow S_{t}$  
are all isomorphic to $\hat{C}:=\hat C_{t_0}$.\smallskip  
 
\nit\unbar{Step 3}\quad The previous step reduces the initial problem  
to the study of deformations of pairs $(S,\hat{C})$, consisting of a  
$K3$ surface $S$, and a nodal curve $\hat{C}\hra S$  
(that is we forget about the normalization $\nu:C\rar\hat C$).  
More precisely, we must prove that for any commutative diagram 
\begin{equation}{\label{eqn:hat-CX}} 
\xymatrix@C=1.5em@R=1.5em{ 
\disc\times\hat C\ar[rr]^-{\jmath\;={(\jmath_t)}_t}\ar[rd]&& 
\euf S={(S_t)}_t\ar[ld]^-\pi 
\\ 
&\disc& 
} 
\quad 
\begin{array}[t]{ll} 
\text{such that }&\hat C\cong\jmath_t(\hat C)\hra S_t, 
\;\forall\,t\in\disc 
\\  
&\text{are nodal curves,} 
\end{array} 
\end{equation} 
the family $(\euf{S},\disc\times\hat C,\jmath)$ is trivial.  
 
The deformations of the pair $(S,\hat C)$ are controlled by the  
locally free sheaf $T_{S}\langle\hat{C}\rangle$, defined similarly  
as in \eqref{deform} (see \cite[section 2]{fkps}). It fits into  
the exact sequence  
$$ 
0\longrightarrow\cal T_{S}(-\hat{C}) 
\longrightarrow  
\cal T_{S}\langle\hat{C}\rangle 
\longrightarrow  
\cal T_{\hat{C}}:=\nu_*\cal T_C\longrightarrow0. 
$$ 
The deformation ${\jmath}$ appearing in \eqref{eqn:hat-CX} keeps the nodal  
curve $\hat{C}$ fixed, as an abstract curve. Therefore the infinitesimal deformation  
induced by ${\jmath}$ corresponds to an element\\[1ex]  
\centerline{$ 
\hat e\in\Ker\! 
\left(H^{1}(S,\cal T_{S}\langle\hat{C}\rangle) 
\rightarrow  
H^{1}(\hat{C},\cal T_{\hat{C}})\right) 
= 
\Img\! 
\left(H^{1}(S,\cal T_{S}(-\hat{C})) 
\rightarrow  
H^{1}(S,\cal T_{S}\langle\hat{C}\rangle)\right). 
$}\\[1ex] 
According to \cite{mu}, $H^1(S,\cal T_{S}(-\hat{C}))=0$ for a general  
$(S,\cal A)$ with $\cal A^2=2(n-1)\ges 24$. It follows that $\hat e=0$,  
which means that the deformation of the pair $(S,\hat C)$ is trivial.  
\end{proof} 
 
\begin{remark} 
The first step of the previous proof can be interpreted and proved  
at the level of the Zariski tangent space of $\crl V_{n,\d}^d$.  
Let $e\in H^1(\tilde S,(\si^*\cal T_S)\langle C\rangle)$ be  
the element corresponding to the deformation \eqref{eqn:CX}.  
By diagram chasing in  \eqref{deform} at the level of  
the long exact sequences in cohomology, we obtain the following  
(self-explanatory) diagram:  
\begin{equation*} 
\xymatrix@C=0em@R=2em{ 
\exists!& 
f\ar@{|->}[d]\ar@{=}[rrr] 
& 
&& 
f\ar@{|->}[d]& 
\in H^1\bigl(\tilde S,(\si^*\cal T_S)(-C)\bigr) 
\ar@{>->}[d]^-{\text{It is {\it injective} by corollary \ref{cor:tang}.}} 
\\  
&e\ar@{|->}[d]\ar@{:>}@/^2ex/[u]^-{H^0(C,\cal T_C)\,=0} 
\ar@{|->}@/_3ex/@<-1ex>[rrr] 
& 
\in H^1\bigl(\tilde S,(\si^*\cal T_S)\langle C\rangle\bigr) 
&& 
e_0& 
\in H^1(\tilde S,\si^*\cal T_S)\phantom{MM} 
\\  
&0& 
\in H^1(C,\cal T_C)\phantom{MMM}&&& 
} 
\end{equation*} 
More precisely, we have the inclusion\\[.5ex]  
\centerline{$ 
\Ker\bigl(H^1(r)\bigr) 
\srel{\raise.5ex\hbox{\tiny$H^0(C,\cal T_C)=0$}}{=\kern-.5ex=} 
H^1\bigl(\tilde S,(\si^*\cal T_S)(-C)\bigr) 
\srel{\hbox{\tiny$H^0(C,u^*\cal T_S)=0$}}{\subset} 
H^1(\tilde S,\si^*\cal T_S)\cong H^1(S,\cal T_S) 
$}\\[1ex] 
which shows that we can identify the deformation \eqref{eqn:CX} with the  
induced infinitesimal deformation of $S$, corresponding to $e_0=\pi_*(e)$.  
This is the differential theoretic counterpart of the claim that the  
section $s$ appearing in the proof of \ref{thm:rigid} has the form  
$(s_0,0)$. 
 
The third step can be proved by differential methods too. However  
the second step is not proved at the tangential level: it uses effectively  
the fact that we are considering a $1$-dimensional deformation.  
\end{remark} 
 
As a byproduct we obtain: 
 
\begin{corollary} 
Suppose that $S$ is a general $K3$ surface with $\Pic(S)=\mbb Z\cal A$,  
$\cal A^2=2(n-1)\ges 24$. Let $\hat C\hra S$ be a nodal curve of  
degree $d$ with $\d$ nodes, such that $\d\les\d_\mx(n,d)$; we denote by  
$\euf N$ the set of nodes of $\hat C$, and let  
$\cal I_{\euf N}\subset\cal O_S$ be their ideal sheaf. Then holds:  
$$ 
\begin{array}{ll} 
H^1\bigl(S,\Omega^1_S(\hat C)\otimes\cal I_{\euf N}\bigr)=0,\; 
&\text{or equivalently}\\[1.5ex]  
H^0\bigl(S,\Omega^1_S(\hat C)\bigr) 
\lar\underset{\hat x\in\euf N}{\bigoplus}\Omega^1_{S,\hat x}\; 
&\text{is surjective.} 
\end{array} 
$$  
\end{corollary}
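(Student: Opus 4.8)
The plan is to dualise the statement into the first–order version of Theorem~\ref{thm:rigid}. Let $\si:\tilde S\to S$ be the blow-up of the $\d$ nodes $\euf N$, write $E=\si^{-1}(\euf N)=\sum_iE_i$ (so each $E_i\cong\mbb P^1$), and let $\tilde C\cong C$ be the strict transform of $\hat C$, so that $u=\si|_{\tilde C}$ and $\si^*\hat C=\tilde C+2E$. Since $\si_*\cal O_{\tilde S}(-E)=\cal I_{\euf N}$ and $R^1\si_*\cal O_{\tilde S}(-E)=0$, the projection formula together with the Leray spectral sequence give $H^j(S,\cal F\otimes\cal I_{\euf N})=H^j(\tilde S,\si^*\cal F(-E))$ for every locally free $\cal F$ on $S$; taking $\cal F=\Omega^1_S(\hat C)$ and using $\si^*\hat C-E=\tilde C+E$ this yields $H^1(S,\Omega^1_S(\hat C)\otimes\cal I_{\euf N})=H^1(\tilde S,\si^*\Omega^1_S(\tilde C+E))$. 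Now $\tilde S$ is a smooth projective surface with $\omega_{\tilde S}=\si^*\omega_S\otimes\cal O_{\tilde S}(E)=\cal O_{\tilde S}(E)$ (as $\omega_S\cong\cal O_S$), so by Serre duality on $\tilde S$ and the tautology $(\si^*\Omega^1_S(\tilde C+E))^\vee=\si^*\cal T_S(-\tilde C-E)$,
\[
H^1\bigl(S,\Omega^1_S(\hat C)\otimes\cal I_{\euf N}\bigr)^\vee\;\cong\;H^1\bigl(\tilde S,\si^*\cal T_S(-\tilde C-E)\otimes\cal O_{\tilde S}(E)\bigr)=H^1\bigl(\tilde S,\si^*\cal T_S(-\tilde C)\bigr).
\]

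I would then identify the right-hand group with $\Ker(\rd\mu)$. The left-hand column of diagram~\eqref{deform} reads $0\to\si^*\cal T_S(-\tilde C)\to(\si^*\cal T_S)\lan C\ran\xrar{r}\cal T_C\to0$. Since $\d\les\d_\mx(n,d)$ the genus of $C$ is at least two, so $H^0(C,\cal T_C)=0$, and the long exact cohomology sequence identifies $H^1(\tilde S,\si^*\cal T_S(-\tilde C))$ with $\Ker\bigl(H^1(r)\bigr)$; finally $H^1(r)$ is the differential of the forgetful morphism $\mu$ at $(S,C,u)$ (see the Remark following Theorem~\ref{thm:rigid}). Hence $H^1(S,\Omega^1_S(\hat C)\otimes\cal I_{\euf N})^\vee\cong\Ker(\rd\mu_{(S,C,u)})$, and everything reduces to showing that $\mu$ is unramified at $(S,C,u)$.

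This — the heart of the matter — is the first-order form of Theorem~\ref{thm:rigid}, and the proof given there applies verbatim with the base $\disc$ replaced by $\mathrm{Spec}\,\mbb C[\veps]$. Let $e\in\Ker(\rd\mu_{(S,C,u)})$; it is a first-order deformation $(S_\veps,u_\veps)$ of $(S,u)$ along which $\mu$ is constant. Step~1 of the proof of Theorem~\ref{thm:rigid} still produces the ``vertical'' section $\bar s$, whose restriction to the closed fibre lies in $H^0(C,u^*\cal T_S)$; by Corollary~\ref{cor:tang} this group is zero, and since $\veps^2=0$ it follows that $\bar s=0$. Step~2 then goes through, because $\partial/\partial\veps$ trivialises $\cal T_{\mathrm{Spec}\,\mbb C[\veps]}$: the ``vertical'' part of $u_\veps$ is constant, so $u_\veps(C)$ identifies the same pairs of points of $C$ and, as $\rd\mu(e)=0$ leaves $C$ fixed, $u_\veps(C)\cong\hat C$ as an abstract nodal curve. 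Step~3 now shows that the induced first-order deformation of the pair $(S,\hat C)$ (which fixes $\hat C$) corresponds to an element of $\Ker\bigl(H^1(S,\cal T_S\lan\hat C\ran)\to H^1(\hat C,\nu_*\cal T_C)\bigr)=\Img\bigl(H^1(S,\cal T_S(-\hat C))\to H^1(S,\cal T_S\lan\hat C\ran)\bigr)$, which vanishes since $H^1(S,\cal T_S(-\hat C))=0$ by \cite{mu} (valid because $\cal A^2=2(n-1)\ges24$). Therefore $e=0$, and $H^1(S,\Omega^1_S(\hat C)\otimes\cal I_{\euf N})=0$.

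Finally, the equivalence with the surjectivity of $H^0(S,\Omega^1_S(\hat C))\to\bigoplus_{\hat x\in\euf N}\Omega^1_{S,\hat x}$ follows from the exact sequence $0\to\Omega^1_S(\hat C)\otimes\cal I_{\euf N}\to\Omega^1_S(\hat C)\to\bigoplus_{\hat x}\Omega^1_{S,\hat x}\to0$ together with $H^1(S,\Omega^1_S(\hat C))=H^1(S,\cal T_S(-\hat C))^\vee=0$ (Serre duality on $S$ and \cite{mu}). I expect the delicate point to be the third paragraph: as the Remark after Theorem~\ref{thm:rigid} stresses, Step~2 of that proof genuinely uses the $1$-dimensionality of the base, so one must verify — as sketched above — that it survives over $\mathrm{Spec}\,\mbb C[\veps]$, the essential inputs being that this base still carries a single trivialising vector field and that Corollary~\ref{cor:tang} (applied to the closed fibre), combined with $\veps^2=0$, annihilates the obstruction of Step~1. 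A purely cohomological alternative would factor the restriction $H^1(S,\cal T_S)\to H^1(C,u^*\cal T_S)$ through $H^1(\hat C,\cal T_S|_{\hat C})$ — the first arrow injective because $H^1(S,\cal T_S(-\hat C))=0$, the kernel of the second being the image of the coboundary $\bigoplus_{\hat x}(\cal T_S)_{\hat x}\to H^1(\hat C,\cal T_S|_{\hat C})$, which is injective by Corollary~\ref{cor:tang} — but proving that these two subspaces of $H^1(\hat C,\cal T_S|_{\hat C})$ meet only in $0$ seems to require re-importing the geometric argument of Theorem~\ref{thm:rigid}.
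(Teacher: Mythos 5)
Your first two paragraphs and your last one are correct and reconstruct exactly what the paper leaves implicit: Serre duality on $\tS$ (with $\omega_\tS=\cal O_\tS(E)$ and $\si^*\hat C=\tilde C+2E$) identifies $H^1\bigl(S,\Omega^1_S(\hat C)\otimes\cal I_{\euf N}\bigr)^\vee$ with $H^1\bigl(\tS,(\si^*\cal T_S)(-C)\bigr)$, the first term of the left column of \eqref{deform}, and the long exact sequence of that column together with $H^0(C,\cal T_C)=0$ identifies this group with $\Ker\bigl(H^1(r)\bigr)=\Ker\bigl(\rd\mu_{(S,C,u)}\bigr)$. The divergence is in how you kill this kernel. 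The paper's proof is a one-liner because it proves nothing new at this point: the corollary is a \emph{byproduct} of Theorem \ref{thm:rigid}, and the hypothesis that $S$ is \emph{general} is there precisely so that $(S,C,u)$ is a general point of $\crl V^d_{n,\d}$; since $\mu$ is generically finite onto its image and we are in characteristic zero, generic smoothness makes $\rd\mu=H^1(r)$ injective at such a point. That is the entire content of ``use the fact that $H^1(r)$ is injective''.

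You instead try to prove that $\mu$ is \emph{unramified} at $(S,C,u)$ by rerunning the proof of Theorem \ref{thm:rigid} over $\mathrm{Spec}\,\mbb C[\veps]$ and asserting that ``Step 2 then goes through''. This is exactly the step the paper disclaims: the Remark after Theorem \ref{thm:rigid} states that the second step ``is not proved at the tangential level: it uses effectively the fact that we are considering a $1$-dimensional deformation''. Over a reduced one-parameter base, Step 2 is a genuine pointwise argument — one evaluates $u_t$ at the two preimages of each node for \emph{every} closed point $t$ and integrates $\rd z(\del/\del t)=\rd w(\del/\del t)=0$; over $\mbb C[\veps]$ there are no closed points to vary, and ``$u_\veps(C)$ identifies the same pairs of points'' must be replaced by the cohomological assertion that $e$ dies in $H^1(\hat C,\cal T_{\hat C})=H^1(\hat C,\nu_*\cal T_C)$. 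That assertion does not follow from $H^1(r)(e)=0$ without comparing the left column of \eqref{deform} on $\tS$ with the sequence $0\to\cal T_S(-\hat C)\to\cal T_S\lan\hat C\ran\to\nu_*\cal T_C\to0$ on $S$, and this comparison is obstructed by $R^1\si_*$ terms concentrated at the nodes: the Leray sequence for $\si$ gives $0\to H^1(S,\cal T_S(-\hat C))\to H^1\bigl(\tS,(\si^*\cal T_S)(-C)\bigr)\to H^0\bigl(\cal T_S(-\hat C)\otimes R^1\si_*\cal O_\tS(2E)\bigr)$, so the group you must annihilate exceeds $H^1(S,\cal T_S(-\hat C))=0$ by (a subspace of) a $2\d$-dimensional space whose vanishing is \emph{precisely} the dual statement of the corollary. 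An argument whose only cohomological input is Mukai's vanishing therefore risks circularity, and indeed if your first-order Steps 2--3 were correct they would show $\mu$ is everywhere unramified, making the one-parameter argument of Theorem \ref{thm:rigid} superfluous — which the author evidently does not believe. Replacing your third paragraph by the generic-smoothness deduction closes the gap and recovers the paper's proof.
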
 
 
\begin{proof} 
Simply consider the long exact sequence in cohomology corresponding to  
the first column in \eqref{deform}, and use the fact that $H^1(r)$ is injective. 
\end{proof}


\section{Applications to the Wahl map}{\label{sect:wahl}} 

The Wahl map for curves has been considered for the first time in 
\cite{wa}. The surjectivity of the Wahl map represents an obstruction 
for embedding a smooth curve into a $K3$ surface. For an overview of 
these results, and for further generalizations, we invite the reader 
to consult \cite{wa2}. Here we recall only those notions which are 
necessary for this article. 

Suppose that $\cal L\rar V$ is a line bundle over some variety $V$.  
The Wahl map is by definition 
\begin{equation}{\label{wahl-L}} 
w_{\cal L}: 
\overset{2}{\hbox{$\bigwedge$}}\, 
H^0(V,\cal L)\rar H^0(V,\O^1_V\otimes\cal L^2), 
\quad 
s\wedge t\lmt s\rd t-t\rd s. 
\end{equation} 
Equivalently, the Wahl map is the restriction homomorphism  
$H^0({\rm res}_\Delta)$ at the level of sections, induced by  
the exact sequence\\[1ex]  
\centerline{$ 
0\rar 
\cal I_{\Delta_V}^2\otimes(\cal L\boxtimes\cal L)\lar 
\cal I_{\Delta_V}\otimes(\cal L\boxtimes\cal L)\srel{{\rm res}_\Delta}{\lar} 
\underbrace{(\cal I_{\Delta_V}/\cal I_{\Delta_V}^2)}_{\cong\;\O^1_V} 
\otimes\cal L^2\rar 0. 
$}\\[.5ex]  
Much attention has been payed to the case when $V=C$ is a smooth projective  
curve, and $\cal L=\cal M=\cal K_C$, where $\cal K_C$ is the canonical line  
bundle of $C$. The importance of the map  
$$ 
w_C: 
\overset{2}{\hbox{$\bigwedge$}}\,H^0(C,\cal K_C)\rar H^0(C,\cal K_C^3) 
$$ 
relies in the fact that it gives an obstruction to realize the curve $C$ as  
a hyperplane section of a $K3$ surface. More precisely: 
 
\begin{theorem}{\label{thm:w+b}}\  
\begin{enumerate} 
\item {\rm (}see \cite{cmh}{\rm )}  
The Wahl map $w_C$ is surjective for a {\em general} curve $C$ of genus  
at least $12$.  
 
\item {\rm (}see \cite{wa,bm}{\rm )}  
Suppose that $C\hra S$ is a smooth hyperplane section of some  
$K3$ surface. Then the Wahl map is {\em not} surjective. 
\end{enumerate} 
\end{theorem}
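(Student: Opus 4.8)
For part (i) I would invoke the theorem of Ciliberto, Harris and Miranda \cite{cmh}; its mechanism is that, in a flat family of Deligne--Mumford stable curves of fixed genus $g$, the source $\bigwedge^2H^0(\cal K_C)$ and the target $H^0(\cal K_C^3)$ of the Wahl map have constant dimension, so $\dim\Coker(w_C)$ is upper semicontinuous. It then suffices to produce, for each $g\ges 12$, a single singular stable curve of genus $g$ -- e.g.\ a suitable reducible nodal curve on which the Gaussian map decomposes into pieces one computes by hand -- for which $w$ is surjective, and to smooth it. Since the present article re-proves and extends part (ii), I concentrate on that.

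For (ii): suppose $C\hra S$ is a smooth, irreducible curve of genus $g\ges 2$ lying on a $K3$ surface $S$, say $C\in|\cal A|$. As $\cal K_S\cong\cal O_S$, adjunction gives
$$
\cal K_C\;\cong\;\cal O_S(C)|_C\;\cong\;N_{C/S},\qquad\text{hence}\qquad N^\vee_{C/S}\cong\cal K_C^{-1},
$$
so the conormal sequence of $C$ in $S$ is $0\rar\cal K_C^{-1}\rar\O^1_S|_C\rar\cal K_C\rar0$, an extension of $\cal K_C$ by $\cal K_C^{-1}$. Its class lies in ${\rm Ext}^1_C(\cal K_C,\cal K_C^{-1})\cong H^1(C,\cal K_C^{-2})$, which Serre duality identifies with the dual of $H^0(C,\cal K_C^3)$, the target of $w_C$. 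Write $\l_C$ for the corresponding linear functional on $H^0(C,\cal K_C^3)$; twisting the conormal sequence by $\cal K_C^2$, pairing with $\l_C$ becomes the connecting homomorphism
$$
H^0\bigl(C,\O^1_S|_C\otimes\cal K_C^2\bigr)\lar H^0(C,\cal K_C^3)\srel{\del}{\lar}H^1(C,\cal K_C)\cong\mbb C.
$$

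The plan is then to establish two facts. \emph{First}, $\l_C$ annihilates $\Img(w_C)$, equivalently $\del\circ w_C=0$; this is Wahl's comparison identity (\cite{wa}, see also the survey \cite{wa2}). Since $H^1(S,\cal O_S)=0$, every $s\in H^0(C,\cal K_C)$ is the restriction of some $\tilde s\in H^0(S,\cal O_S(C))$, and -- working with the sheaf of principal parts on $S$ -- the section $\tilde s\,\rd\tilde t-\tilde t\,\rd\tilde s$ of $\O^1_S\otimes\cal O_S(2C)$ restricts on $C$ to $s\,\rd t-t\,\rd s=w_C(s\wedge t)$; hence $w_C(s\wedge t)$ lies in the image of $H^0(\O^1_S|_C\otimes\cal K_C^2)$, i.e.\ in $\Ker\del$. \emph{Second}, $\l_C\ne0$, i.e.\ the conormal sequence does not split: a splitting would give $\O^1_S|_C\cong\cal K_C^{-1}\oplus\cal K_C$, hence a line subbundle $\cal K_C\hra\O^1_S|_C$ of degree $2g-2>0$ inside the restriction to $C$ of the stable bundle $\O^1_S$ (which has $c_1=0$, as $\cal T_S$ is stable and $S$ is a $K3$) -- contradicting the semistability of $\O^1_S|_C$, which for the restriction of a stable bundle to a smooth member of an ample system is standard (Bogomolov's restriction theorem \cite[\S 7.3]{hl}, cf.\ Proposition \ref{tech}; or Mehta--Ramanathan). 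Combining the two facts, $\Img(w_C)\subsetneq H^0(C,\cal K_C^3)$, so $w_C$ is not surjective. Note that the stability of $\cal T_S$ -- and, for nodal $C$, of $u^*\cal T_S$ as in Corollary \ref{cor:tang} -- is exactly the input that \S\ref{sect:wahl} will exploit to carry this argument over to normalizations of nodal curves.

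The step I expect to be the real obstacle is the first one, Wahl's identity: making sense of ``$\rd$ of a section of a line bundle on $S$'' forces one to work with the sheaf of principal parts and its jet exact sequence, and one must then match -- with the right signs and the right identification of the ${\rm Ext}$-class with the Serre pairing -- the global section of $\O^1_S\otimes\cal O_S(2C)$ so produced against the antisymmetrized derivative $s\,\rd t-t\,\rd s$ on $C$. Once that is in place the remainder is formal, the only geometric input being the (semi)stability of $\O^1_S|_C$, classical for a smooth hyperplane section and, in its effective nodal form, the content of Corollary \ref{cor:tang}.
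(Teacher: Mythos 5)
You should first note that the paper does not actually prove Theorem \ref{thm:w+b}: both parts are quoted from the literature (\cite{cmh} for (i), \cite{wa,bm} for (ii)), and only the mechanism of (ii) is sketched around diagram \eqref{smooth-fc}. Your reconstruction of (ii) follows exactly that mechanism: lift sections of $\cal K_C$ to $S$ using $H^1(S,\cal O_S)=0$, so that $\Img(w_C)$ lands in the image of $H^0(C,\O^1_S|_C\otimes\cal K_C^2)\rar H^0(C,\cal K_C^3)$; identify the cokernel of that restriction map, via Serre duality, with pairing against the extension class of the (twisted) conormal sequence; and conclude from non-splitting. This is also precisely the skeleton of the paper's own nodal generalization (diagram \eqref{nodal-fc} together with Lemma \ref{non-split}), so the architecture is right. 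Your description of the \cite{cmh} degeneration strategy for (i) is accurate at the citation level.

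The genuine gap is in your second fact, the non-splitting of the conormal sequence. You derive it from the semistability of $\O^1_S|_C$ and call that standard. It is not, in the generality the statement requires: part (ii) is asserted for an \emph{arbitrary} smooth hyperplane section of an \emph{arbitrary} $K3$ surface. Mehta--Ramanathan gives stability of the restriction only for a general member of $|k\cal A|$ with $k\gg 0$, not for a given hyperplane section; and Bogomolov's effective restriction theorem---which is exactly Proposition \ref{tech} with $\d=0$, $r=2$, $\cal E=\cal T_S$, $c_2(\cal T_S)=24$---requires $\bigl\lfloor\frac{n}{2}\bigr\rfloor\ges 25$, i.e. $\cal A^2\ges 98$, already in the case $d=1$. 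A splitting produces a line subbundle of positive degree $2g-2$ inside the degree-zero rank-two bundle $\O^1_S|_C$, which contradicts \emph{semistability} but nothing else; without semistability no contradiction follows, since an unstable rank-two bundle of degree zero can well contain such a subbundle. So your argument establishes (ii) only for hyperplane sections of genus at least $50$---and this is exactly the reason the paper's own nodal analogue, Theorem \ref{thm:wahl}, carries the hypothesis $\d\les\d_\mx(n,d)$ with its implicit lower bounds on $n$. The classical proofs in \cite{wa} and \cite{bm} obtain the non-splitting for every smooth curve of genus $\ges 2$ on a $K3$ by an argument that does not pass through stability of the restricted tangent bundle; to prove the theorem as stated you must supply that (or some other) argument in low genus rather than appeal to restriction theorems.
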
 
 
In other words, a generic {\em smooth} curve $C\in\crl M_g$  
can not be realized as a hyperplane section of any $K3$ surface,  
as soon as $g\ges 12$.  
The surjectivity of the Wahl map is an obstruction for embedding  
a smooth curve into a $K3$ surface.  
 
Remarkably enough, nodal curves escaped to the attention. Recently, in  
\cite[Question 5.6]{fkps}, the authors asked whether there is an analogous  
obstruction for embedding nodal curves. We will apply the estimates  
obtained in section \ref{sct:0} to prove the following result: 
 
\begin{theorem}{\label{thm:wahl}} 
Let $S$ be a Picard general $K3$ surface, and let $\hat C\hra S$ be a  
nodal curve of degree $d$ with $\d$ nodes, with  
$\d\les\min\left\{\d_\mx(n,d),\frac{(n-1)d^2-1}{3}\right\}$.  
Then the Wahl map\\   
\centerline{$ 
w_C:\hbox{$\overset{2}\bigwedge$}\, H^0(C,\cal K_C)\rar H^0(C,\cal K_C^3) 
$}\\[1ex]  
is not surjective. 
{\rm(} See corollary \ref{cor:tang} for the definition of $\d_\mx(n,d)$. {\rm)} 
\end{theorem}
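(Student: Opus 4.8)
The plan is to deduce the non-surjectivity of $w_C$ from the non-vanishing of a suitable cohomology group on $S$, via the description of $w_C$ as a restriction homomorphism. Recall the Wahl map is $H^0$ of the restriction map attached to the diagonal ideal sequence on $C\times C$ with $\cal L=\cal K_C$. The classical argument of Wahl (and Beauville--M\'erindol) for a smooth hyperplane section $C\hra S$ identifies the cokernel of $w_C$, or at least produces a non-zero obstruction class, using the fact that $C$ sits on $S$ and that $H^0(S,\cal T_S)=0$ while the normal/tangent bundle sequence of $C$ in $S$ contributes. The key new input here is Corollary \ref{cor:tang}: under the hypothesis $\d\le\d_\mx(n,d)$, the pulled-back tangent bundle $u^*\cal T_S\rar C$ is stable of rank two, hence $H^0(C,u^*\cal T_S)=0$. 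This is exactly the analogue, for the normalization of a nodal curve, of the vanishing $H^0(S,\cal T_S)=0$ that drives the smooth case.

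The steps, in order, would be: (1) Write down the tangent--normal sequence for $u:C\rar S$, namely $0\rar\cal T_C\rar u^*\cal T_S\rar \cal N_u\rar 0$ with $\cal N_u\cong\cal K_C$ (this is already in the excerpt, see diagram \eqref{deform}). Twisting by $\cal K_C^{2}$ and taking cohomology, the connecting map $H^0(C,\cal K_C^{3})\rar H^1(C,\cal T_C\otimes\cal K_C^{2})$ has kernel the image of $H^0(C,u^*\cal T_S\otimes\cal K_C^{2})$. (2) Produce a commutative diagram relating $w_C$ to this sequence: the Wahl map, composed with the coboundary, factors through the Gaussian/Wahl map associated to the pair $(C,S)$, so that the obstruction to surjectivity of $w_C$ is measured by a class living in a group built from $u^*\cal T_S$. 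Concretely, following Wahl, one shows $\mathrm{coker}(w_C)$ surjects onto $H^1(C, u^*\cal T_S\otimes\cal K_C)^\vee$-type data, or that the image of $w_C$ is contained in the kernel of a map to $H^1$ of a bundle whose sections one controls. (3) Invoke Corollary \ref{cor:tang} to get $H^0(C,u^*\cal T_S)=0$, and more to the point the stability of $u^*\cal T_S$, to conclude that the relevant cohomology group forcing a genuine cokernel is non-zero; here the extra hypothesis $\d\le\frac{(n-1)d^2-1}{3}$ enters to guarantee the numerical inequality needed (it compares $\d$ against a third of $g(d)-1$, i.e. it controls $\deg\cal K_C=2(g(d)-\d)-2$ against $3$ times something, exactly what is needed so that the $H^0$ or $H^1$ of the cubic-type twist behaves as in the smooth case).

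The main obstacle will be Step (2): in the smooth case the identification of $\mathrm{coker}(w_C)$ with $H^1(S,\cal T_S(-C))$ (or the injectivity of a comparison map into it) uses that $C$ is a genuine divisor on $S$, so one has the ideal sheaf sequence $0\rar\cal O_S\rar\cal O_S(C)\rar\cal O_C(C)\rar 0$ and the full $\cal T_S$ on $S$ at one's disposal. Here $C$ only maps to $S$, and $\hat C=u(C)$ is nodal, so one must work on the blow-up $\tilde S$ (as in the excerpt) or with the sheaf $(\si^*\cal T_S)\langle C\rangle$, and carefully track the $\d$ nodes through the relevant exact sequences; the degree-$d$ twist (rather than $d=1$) also means $\cal K_C$ is not $u^*\cal O_S(1)|_C$ but $u^*\cal O_S(d)|_C$ twisted by something, so the adjunction-type identifications need to be redone. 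I expect the bulk of the work to be this bookkeeping, after which the vanishing from Corollary \ref{cor:tang} closes the argument much as in \cite{wa,bm}. Once the comparison is set up, the numerical hypothesis $\d\le\min\{\d_\mx(n,d),\tfrac{(n-1)d^2-1}{3}\}$ is precisely what makes both the stability input and the cohomological estimate simultaneously available, so no further constraints should be needed.
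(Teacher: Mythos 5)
Your overall strategy---lift the Wahl map to the surface, compare it with the (co)normal bundle sequence of $C$, and use corollary \ref{cor:tang} as the substitute for $H^0(S,\cal T_S)=0$---is indeed the strategy of the paper, which also works on the blow-up $\tilde S$ of $S$ at the nodes, exactly as you anticipate. But there is a genuine gap in your Step (2), and it is not mere bookkeeping: the specific twist you propose in Step (1) leads to a vacuous obstruction. The Wahl map of $C$ lifts to the surface only through $\tilde S$ (sections of $\cal K_C$ extend to sections of $\si^*\cal L(-E)$ on $\tilde S$, not to sections of $\cal L$ on $S$ restricting to $\cal K_C$), and on $\tilde S$ the relevant conormal sequence is $0\rar\cal T_C(\Delta)\rar\Omega^1_{\tilde S}\otimes\cal O_C\rar\cal K_C\rar 0$, where $\Delta$ is the degree-$2\d$ divisor over the nodes. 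Twisting by $\cal K_C^2$ makes the sub-line-bundle $\cal K_C(\Delta)$, and $H^1\bigl(C,\cal K_C(\Delta)\bigr)=0$, so the coboundary whose vanishing you would extract from the surjectivity of $w_C$ vanishes automatically and yields no contradiction. The missing idea is to twist down by $\Delta$: one must restrict $w_C$ to $R(C,\Delta):=w_C^{-1}\bigl(H^0(C,\cal K_C^3(-\Delta))\bigr)$, check that this restriction is still surjective onto $H^0\bigl(C,\cal K_C^3(-\Delta)\bigr)$ and that it is covered by $H^0\bigl(\tilde S,\Omega^1_{\tilde S}\otimes\si^*\cal L^2(-3E)\bigr)$---this lifting and surjectivity statement is the actual technical content of the paper's proof (the commutative cube \eqref{cube} and lemmas \ref{lm:help}--\ref{help-2} of the appendix)---and only then apply the Beauville--M\'erindol splitting lemma to $0\rar\cal K_C\rar\Omega^1_{\tilde S}\otimes\cal O_C\otimes\cal K_C^2(-\Delta)\rar\cal K_C^3(-\Delta)\rar 0$.

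A second, smaller imprecision: the hypothesis $\d\les\frac{(n-1)d^2-1}{3}$ does not enter in making the cohomological comparison available, as you suggest; it enters only at the very end, in proving that the sequence \eqref{tang-sqn} is \emph{not} split (lemma \ref{non-split}). A splitting would realize $\cal K_C(-\Delta)$ as a direct summand of $\tilde u^*\cal T_{\tilde S}\subset u^*\cal T_S$, hence would embed $H^0\bigl(C,\cal K_C(-\Delta)\bigr)$ into $H^0(C,u^*\cal T_S)=0$, while Riemann--Roch gives $h^0\bigl(C,\cal K_C(-\Delta)\bigr)\ges g(d)-1-3\d\ges 1$ precisely under that bound. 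Note that the contradiction genuinely needs sections of $\cal K_C(-\Delta)$ rather than of $\cal K_C$, because the split complement of $\cal T_C$ on the blow-up is $\cal K_C(-\Delta)$; this is another place where your plan, phrased in terms of $u^*\cal T_S$ on $S$ rather than $\tilde u^*\cal T_{\tilde S}$, would have to be corrected before the vanishing from corollary \ref{cor:tang} can close the argument.
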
 
 
\nit We remark that for $n\ges 146$, the minimum between the two numbers  
above is  
$$ 
\begin{cases} 
\frac{(n-1)d^2-1}{3}&\text{for }d=1,\ldots,4; 
\\  
2(n-1)(d-1)-25&\text{for }d\ges 5. 
\end{cases} 
$$ 
The proof of the theorem is inspired from \cite{bm}, but contains several  
modifications needed to include the double points.  
Let us recall from {\it loc.\,cit.} that the proof of  
\ref{thm:w+b}(ii) is based on the study of the diagram  
\begin{equation}{\label{smooth-fc}} 
\xymatrix@R=2em{ 
\underset{}{\srel{2}{\bigwedge}} 
\,H^0\bigl(S,\cal O_S(C)\bigr) 
\ar[r]^-{w_S}\ar[d]_-\rho 
& 
H^0\bigl(S,\Omega^1_S\otimes\cal O_S(2C)\bigr) 
\ar[d]^-{\rho_1}\ar[r] 
& 
H^0\bigl(C,\Omega^1_S\bigr|_C\otimes\cal K_C^2\bigr) 
\ar[dl]^-b 
\\  
\underset{}{\srel{2}{\bigwedge}} 
\,H^0\bigl(C,\cal K_C\bigr) 
\ar[r]^-{w_C} 
&  
H^0(C,\cal K_C^3).& 
} 
\end{equation} 
The surjectivity of $w_C$ implies the surjectivity of $b$, and one  
proves that this is impossible.  
 
The main difficulty for extending this proof to the nodal case is that  
of finding appropriate substitutes for the cohomology groups appearing  
in \eqref{smooth-fc}.  
Since this task is rather computational, and is based on diagram chasing,  
it has been deferred to appendix \ref{diag-chase}.  
 
Now we introduce some notations which will be used in the proof  
of theorem \ref{thm:wahl}. We denote by $\cal A\rar S$ the ample  
generator of $\Pic(S)$.  
Let $\hat C\hra S$ be a nodal curve of degree $d$ with $\d$ nodes,  
and let $\euf N=\{\hat x_1,\ldots,\hat x_\d\}\subset S$ be its nodes.  
We denote by $C\srel{\nu}{\rar}\hat C$ the normalization, and  
by $x_{1,1},x_{1,2},\ldots,x_{\d,1},x_{\d,2}\in C$ the pre-images  
by $\nu$ of $\hat x_1,\ldots,\hat x_\d$ respectively.  
Let $\tilde S:={\rm Bl}_{\euf N}(S)\srel{\si}{\rar}S$ be the  
blow-up of $S$ at the nodes of $\hat C$, and $E=E_1+\ldots+E_\d$  
be the exceptional divisor in $\tS$. Then the diagram 
$$ 
\xymatrix@C=5em{ 
C\;\ar[d]_-\nu\ar@{^{(}->}[r]^{\tilde u}\ar[dr]^-u& 
\tilde S\ar[d]^-\si 
\\  
\hat C\;\ar@{^{(}->}[r]^{\jmath}& 
S 
} 
$$ 
is commutative, and $\tilde u$ is an embedding. Note that the divisor  
$\Delta:=x_{1,1}+x_{1,2}+\ldots+x_{\d,1}+x_{\d,2}$ equals  
$C\cdot E$, hence $\cal O_C(\Delta)=\cal O_S(E)|_C=\cal K_\tS\bigr|_C$.  
We deduce the existence of the short exact sequence 
\begin{equation}{\label{tang-sqn}} 
0\rar\cal T_C(\Delta) 
\rar 
\Omega^1_\tS\otimes\cal O_C 
\rar 
\cal K_C\rar0, 
\;\text{ and also that }\;\cal K_C\cong\si^*\cal L(-E). 
\end{equation} 
 
\begin{lemma}{\label{non-split}} 
Let $S,C$, and $u:C\rar S$ be as above.  
If $\d\les\min\left\{\d_\mx(n,d),\frac{(n-1)d^2-1}{3}\right\}$,  
then the exact sequence \eqref{tang-sqn} is not split.  
\end{lemma}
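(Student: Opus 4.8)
The plan is to play the hypothetical splitting of \eqref{tang-sqn} against the stability of $u^*\cal T_S$ supplied by corollary \ref{cor:tang}. The standing assumption $\d\les\d_\mx(n,d)$ puts us exactly in the situation of that corollary, so $u^*\cal T_S\rar C$ is a \emph{stable} rank-two bundle; moreover its degree vanishes because $c_1(\cal T_S)=0$ on the $K3$ surface $S$. Hence $u^*\cal T_S$ has slope $0$, and therefore it admits \emph{no} rank-one subsheaf of non-negative degree. This non-existence is the contradiction I will aim for.

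So suppose \eqref{tang-sqn} is split. All three sheaves occurring in it are locally free on the smooth curve $C$, so the dual sequence
\[
0\lar\cal T_C\lar\cal T_\tS\otimes\cal O_C\lar\cal K_C(-\Delta)\lar 0
\]
is again split; in particular $\cal K_C(-\Delta)$ sits inside $\cal T_\tS\otimes\cal O_C$ as a line subbundle (a direct summand). I would then transport this subbundle into $u^*\cal T_S$. Writing $u=\si\circ\tilde u$ and dualizing the canonical morphism $\si^*\Omega^1_S\rar\Omega^1_\tS$ gives the classical exact sequence for the blow-up of a smooth surface,
\[
0\lar\cal T_\tS\lar\si^*\cal T_S\lar\cal Q\lar 0,
\]
where $\cal Q$ is the push-forward of a line bundle on the exceptional divisor $E=E_1+\dots+E_\d$. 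Since the strict transform $\tilde u(C)$ meets $E$ transversally (in the $2\d$ points $x_{i,j}$), the higher Tor-sheaf of $\cal Q$ along $C$ vanishes, so restriction to $C$ keeps this sequence left exact; as $\si^*\cal T_S\otimes\cal O_C=(\si\circ\tilde u)^*\cal T_S=u^*\cal T_S$, this produces a monomorphism $\cal T_\tS\otimes\cal O_C\hra u^*\cal T_S$. Composing the two inclusions, I obtain a nonzero morphism $\cal K_C(-\Delta)\hra u^*\cal T_S$.

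It then remains to compute one degree. The saturation $\cal M\subset u^*\cal T_S$ of the image of $\cal K_C(-\Delta)$ is a line subbundle with $\deg\cal M\ges\deg\cal K_C(-\Delta)=(2g-2)-2\d$. Since $g=1+(n-1)d^2-\d$, this equals $2(n-1)d^2-4\d$, and the hypothesis $\d\les\frac{(n-1)d^2-1}{3}$ forces it to be $\ges\frac{2(n-1)d^2+4}{3}>0$. Thus $u^*\cal T_S$ would carry a line subbundle of strictly positive degree, contradicting stability; hence \eqref{tang-sqn} cannot be split.

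The one place needing a little care is the inclusion $\cal T_\tS\otimes\cal O_C\hra u^*\cal T_S$: it relies on $C$ and $E$ meeting transversally inside $\tS$, so that the torsion cokernel $\cal Q$ (supported on $E$) restricts to $C$ without introducing a kernel. I would also emphasize that one genuinely needs the twisted-down summand $\cal K_C(-\Delta)$ here: the ``naive'' summand $\cal T_C=\cal K_C^{-1}$ of $\cal T_\tS\otimes\cal O_C$ has negative degree and would yield no contradiction, whereas its complement $\cal K_C(-\Delta)$ has positive degree precisely because of the bound imposed on $\d$.
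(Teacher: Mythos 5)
Your argument is correct, and its skeleton is the same as the paper's: assume \eqref{tang-sqn} splits, dualize to exhibit $\cal K_C(-\Delta)$ as a direct summand of $\cal T_\tS\otimes\cal O_C$, push this line bundle into $u^*\cal T_S$ via the (generically isomorphic, hence injective on the smooth curve $C$) map $\cal T_\tS\otimes\cal O_C\rar u^*\cal T_S$, and contradict corollary \ref{cor:tang}. Where you diverge is the endgame. The paper applies Riemann--Roch to $\cal K_C(-\Delta)$ to produce a nonzero section, hence a nonzero element of $H^0(C,u^*\cal T_S)$, and it is exactly the hypothesis $\d\les\frac{(n-1)d^2-1}{3}$ that makes $h^0\bigl(C,\cal K_C(-\Delta)\bigr)\ges (n-1)d^2-3\d\ges 1$. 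You instead invoke the stronger conclusion of corollary \ref{cor:tang} --- stability of the slope-zero bundle $u^*\cal T_S$ --- and only need $\deg\cal K_C(-\Delta)=2(n-1)d^2-4\d>0$. This buys something: your version of the lemma holds under the weaker bound $\d<\frac{(n-1)d^2}{2}$, so the $\frac{(n-1)d^2-1}{3}$ term in the statement is an artifact of the Riemann--Roch route rather than of the splitting question itself. Two small remarks: the detour through the blow-up sequence $0\rar\cal T_\tS\rar\si^*\cal T_S\rar\cal Q\rar 0$ and the Tor-vanishing is more machinery than needed --- since $\cal Q$ is supported on $E$ and $C\not\subset E$, the restricted map is an isomorphism off the finite set $C\cap E$, and a generically injective map of vector bundles on a smooth curve is automatically injective; and passing to the saturation is likewise optional, since any rank-one subsheaf of positive degree already violates stability of a slope-zero bundle.
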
 
 
\begin{proof} 
Let us assume that $\tilde u^*\cal T_\tS\cong\cal T_C\oplus\cal K_C(-\Delta)$.  
Then it follows from the diagram 
$$ 
\xymatrix@R=1.7em@C=1.5em{ 
0\ar[r]& 
\cal T_C 
\ar[r]\ar@{=}[d]& 
\tilde u^*\cal T_\tS 
\ar[r]\ar@{}[d]|-{\hbox{\Large$\cap$}}& 
\cal K_C(-\Delta) 
\ar[r]\ar@{}[d]|-{\hbox{\Large$\cap$}}& 
0 
\\  
0\ar[r]& 
\cal T_C\ar[r]& 
u^*\cal T_S\ar[r]& 
\cal K_C\ar[r]& 
0 
} 
$$ 
that $H^0\bigl(C,\cal K_C(-\Delta)\bigr)\subset H^0\bigl(S, u^*\cal T_S\bigr)$.  
But the Riemann-Roch formula implies that  
$$ 
h^0\bigl(C,\cal K_C(-\Delta)\bigr)\ges  
\deg_C\cal K_C(-\Delta)-\bigl(g(C)-1\bigr) 
=\bigl(g(C)-1\bigr)-2\d=g(d)-1-3\d\ges 1. 
$$ 
This contradicts corollary \ref{cor:tang}.  
\end{proof}

We denote by $\cal L:=\cal O_S(C)\cong\cal A^d$.  
The Wahl maps of $C$ and $\tS$ fit into the following commutative  
diagram, which is important in the subsequent constructions: 
\begin{equation}{\label{eqn-fc}} 
\xymatrix@R=2.5em{ 
\underset{}{\srel{2}{\bigwedge}} 
\,H^0\bigl(\tS,\si^*\cal L(-E)\bigr) 
\ar[r]^-{w_\tS} 
\ar@{->>}[d]_-\rho^-{\text{\begin{tabular}{l} 
surjective\\[0ex] by lemma \ref{lm:help} 
\end{tabular}}} 
& 
H^0\bigl(\tS,\Omega^1_\tS\otimes\si^*\cal L(-2E)\bigr) 
\ar[d]^-{\rho_1} 
\\  
\underset{}{\srel{2}{\bigwedge}}\,H^0\bigl(C,\cal K_C\bigr) 
\ar[r]^-{w_C} 
&  
H^0(C,\cal K_C^3). 
}\end{equation} 
 
\begin{proof}(of theorem \ref{thm:wahl})  
We assume that there is a curve $\hat C\hra S$ such that the Wahl map  
of its normalization is surjective.  We define\\   
\centerline{$ 
R(C,\Delta):=w_C^{-1}\bigl(\;H^0\bigl(C,\cal K_C^3(-\Delta)\bigr)\;\bigr) 
\subset{\overset{2}{\hbox{$\bigwedge$}}}\;H^0(C,\cal K_C), 
$}\\[1.5ex]  
and denote by  
$w_{C,\Delta}:R(C,\Delta)\rar H^0\bigl(C,\cal K_C^3(-\Delta)\bigr)$  
the restriction of the Wahl map to it.  
Since $w_C$ is surjective, $w_{C,\Delta}$ is surjective too.  
Furthermore, we define\\[.5ex]  
\centerline{$ 
R:=\rho^{-1}\bigl(R(C,\Delta)\bigr) 
\subset\;\srel{2}{\hbox{$\bigwedge$}}H^0\bigl(\tS,\si^*\cal L(-E)\bigr) 
$}\\[1ex] 
In the appendix we will construct the cube \eqref{cube}.  
Its rear face gives us the diagram  
\begin{equation}{\label{nodal-fc}} 
\xymatrix{ 
R\ar[r] 
\ar@{->>}[d]_{\rho_\Delta}& 
H^1\bigl(\tS,\Omega^1_\tS\otimes\si^*\cal L^2(-3E)\bigr) 
\ar[r]\ar[d]& 
H^0\bigl(C,\Omega^1_\tS\bigr|_C\otimes\cal K_C^2(-\Delta)\bigr) 
\ar@{->>}[dl]^-b 
\\  
R(C,\Delta)\ar@{->>}[r]^-{w_{C,\Delta}}& 
H^0\bigl(C,\cal K_C(-\Delta)\bigr) 
&} 
\end{equation} 
which will be the substitute for the diagram \eqref{smooth-fc}  
in the case of nodal curves.  
 
Indeed, the surjectivity of $\rho_{\Delta}$, and of $w_{C,\Delta}$  
implies the surjectivity of the homomorphism $b$. But $b$ is the  
restriction homomorphism at the level of sections in the exact sequence  
\begin{equation}{\label{sqn:b}} 
0\rar\cal K_C\rar\Omega^1_\tS\bigr|_C\otimes\cal K_C^2(-\Delta) 
\rar\cal K_C^3(-\Delta)\rar 0, 
\end{equation} 
obtained by tensoring \eqref{tang-sqn} with $\cal K_C^2(-\Delta)$.  
Therefore the boundary map\\[1ex]  
\centerline{$ 
\partial:H^0\bigl(C,\cal K_C^3\bigr)\rar  
H^1\bigl(C,\cal K_C\bigr) 
$}\\[1ex] 
vanishes. By applying \cite[Lemme 1]{bm}, we deduce that the sequence  
\eqref{sqn:b} is split, hence \eqref{tang-sqn} is split too.  
This contradicts the lemma \ref{non-split}. 
\end{proof}


\appendix

\section{Diagram chasing}{\label{diag-chase}} 
 
In this section we continue to use the notations introduced in  
section \ref{sect:wahl}.  
 
\begin{lemma}{\label{lm:help}} 
The restriction homomorphisms\\[1ex]  
\centerline{$ 
H^0\bigl(\tS,\si^*L(-E)\bigr)\rar H^0\bigl(C,\cal K_C\bigr) 
\;\text{ and }\;  
H^0\bigl(\tS,\si^*L(-2E)\bigr)\rar H^0\bigl(C,\cal K_C(-\Delta)\bigr) 
$}\\[1ex]  
are both surjective.  
\end{lemma}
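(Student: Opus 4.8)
The plan is to recognise both arrows as restriction maps attached to the structure sequence of $C$ inside $\tS$, twisted by a suitable line bundle, and to reduce their surjectivity to a single vanishing statement on $\tS$.

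First I would record the class of the strict transform: since $\hat C$ has ordinary double points at the $\delta$ points of $\euf N$, one has $\cal O_{\tS}(C)\cong\si^*\cal L(-2E)$, equivalently $\cal O_{\tS}(-C)\cong\si^*\cal L^{-1}(2E)$. Together with $\cal K_{\tS}\cong\cal O_{\tS}(E)$ (valid because $\cal K_S\cong\cal O_S$) and with the identifications already used in \eqref{tang-sqn}, namely $\si^*\cal L(-E)\bigr|_C\cong\cal K_C$ and hence $\si^*\cal L(-2E)\bigr|_C\cong\cal K_C(-\Delta)$, I would twist
$$0\to\cal O_{\tS}(-C)\to\cal O_{\tS}\to\cal O_C\to 0$$
by $\si^*\cal L(-E)$ and by $\si^*\cal L(-2E)$, obtaining the two short exact sequences
$$0\to\cal K_{\tS}\to\si^*\cal L(-E)\to\cal K_C\to 0,\qquad 0\to\cal O_{\tS}\to\si^*\cal L(-2E)\to\cal K_C(-\Delta)\to 0.$$
In each case the map of the lemma is the induced map on global sections, and its cokernel injects into the $H^1$ of the kernel bundle.

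It therefore suffices to prove $H^1(\tS,\cal O_{\tS})=0$ and $H^1(\tS,\cal K_{\tS})=0$. The first is immediate: $\si$ is the blow-up of a smooth surface at finitely many points, so $R^q\si_*\cal O_{\tS}=0$ for $q>0$ and $\si_*\cal O_{\tS}=\cal O_S$, whence $H^1(\tS,\cal O_{\tS})\cong H^1(S,\cal O_S)=0$ since $S$ is a $K3$ surface. The second follows from the first by Serre duality on the smooth projective surface $\tS$: $H^1(\tS,\cal K_{\tS})\cong H^1(\tS,\cal O_{\tS})^\vee=0$.

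I do not expect a genuine obstacle here. The only point demanding care is the bookkeeping of divisor classes on $\tS$ — in particular the transversal intersections $E_i\cdot C=x_{i,1}+x_{i,2}$, which give $\cal O_{\tS}(-E)\bigr|_C\cong\cal O_C(-\Delta)$ and thereby pin down the right-hand terms of the two twisted sequences as exactly the target bundles $\cal K_C$ and $\cal K_C(-\Delta)$ of the lemma. Note that neither of the two bounds on $\delta$ enters the argument: the statement reflects only the $K3$ geometry of $S$ and the nodality of $\hat C$.
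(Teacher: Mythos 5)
Your proof is correct and takes essentially the same route as the paper: both twist the ideal--sheaf sequence $0\to\si^*\cal L^{-1}(2E)\to\cal O_\tS\to\cal O_C\to 0$ by $\si^*\cal L(-E)$ and by $\si^*\cal L(-2E)$, identify the restrictions to $C$ as $\cal K_C$ and $\cal K_C(-\Delta)$, and conclude from the vanishings $H^1(\tS,\cal K_\tS)=H^1(\tS,\cal O_\tS)=0$. The only difference is that you justify these vanishings explicitly (via $R^q\si_*\cal O_\tS=0$ and Serre duality), where the paper simply asserts them.
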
 
 
\begin{proof} 
We have the exact sequence over $\tS$:\quad  
$0\rar\si^*\cal L^{-1}(2E)\rar\cal O_\tS\rar\cal O_C\rar 0$.  
 
\nit(i) The first statement is obtained by tensoring it by $\si^*\cal L(-E)$,  
and using that\\[1ex]  
\centerline{$ 
H^1\bigl(\tS,\cal O_\tS(E)\bigr)= H^1\bigl(\tS,\cal K_\tS\bigr)=0. 
$}\\[1ex]  
\nit(ii) The second statement is obtained by tensoring the exact sequence  
by $\si^*\cal L(-2E)$, and using that $H^1\bigl(\tS,\cal O_\tS\bigr)=0.$ 
\end{proof} 
 
There is a natural restriction homomorphism  
$\Omega^1_{\tS}\srel{\res_C}{\lar}\cal K_C$ which is surjective, and  
its kernel $\cal F:=\Ker(\res_C)$ is a locally free sheaf (a vector bundle)  
over $\tS$ of rank two. The following commutative diagram is essential  
for the proof of theorem \ref{thm:wahl}:  
\begin{equation}{\label{cde}} 
\xymatrix@R=1.7em{ 
&0\ar[d]&0\ar[d]&0\ar[d]& 
\\  
0\ar[r]& 
\cal F\otimes\si^*\cal L^2(-3E)\ar[r]\ar[d]& 
\Omega^1_\tS\otimes\si^*\cal L^2(-3E)\ar[r]^-{\rho_{1,\Delta}}\ar[d]& 
\cal K_C^3(-\Delta)\ar[r]\ar[d]& 
0 
\\ 
0\ar[r]& 
\cal F\otimes\si^*\cal L^2(-2E)\ar[r]\ar[d]& 
\Omega^1_\tS\otimes\si^*\cal L^2(-2E)\ar[r]^-{\rho_1}\ar[d]& 
\cal K_C^3\ar[r]\ar[d]& 
0 
\\ 
0\ar[r]& 
\cal F\otimes\cal O_E\ar[r]\ar[d]& 
\Omega^1_E\otimes\cal O_E\ar[r]^-{\res_C}\ar[d]& 
\underset{j=1}{\overset{\d}{\bigoplus}}\; 
\cal K_{C,x_{j,1}}\oplus \cal K_{C,x_{j,1}}\ar[r]\ar[d]& 
0 
\\  
&0&0&0& 
} 
\end{equation} 
Actually the whole proof is based on the careful analysis of this diagram.  
 
Every vector bundle on the projective line splits into the direct sum of 
line bundles. Hence the restriction of $\Omega^1_\tS$ to each component 
$E_j$, $j=1,\ldots,\d$, of the exceptional divisor $E$ is the direct sum 
of line bundles. In fact   
$$ 
\Omega^1_\tS\otimes\cal O_{E_j} 
=\Omega^1_{E_j}\oplus\cal N_{E_j|\tS}^\vee 
=\cal O_{E_j}(-2)\oplus\cal O_{E_j}(1). 
$$ 
Therefore   
$ 
\Omega^1_\tS\otimes\cal O_E=\Omega^1_E\oplus\cal N_{E|\tS}^\vee 
=\cal O_E(-2)\oplus\cal O_E(1)$, where we use the shorthand notation  
$$ 
\cal O_{E}(-2):= 
\overset{\d}{\underset{j=1}{\hbox{$\bigoplus$}}}\;\cal O_{E_j}(-2), 
\quad\text{and}\quad 
\cal O_{E}(1):=\overset{\d}{\underset{j=1}{\hbox{$\bigoplus$}}}\;
\cal O_{E_j}(1). 
$$  
Since the homomorphism ${\rm res}_C$ is the restriction of $1$-forms on $\tS$  
to $1$-forms on $C$, we deduce from the last line in \eqref{cde} that  
\begin{equation}{\label{fe}} 
\cal F\otimes\cal O_E=\cal O_E(-2)\oplus\cal O_E(-1). 
\end{equation} 
 
\begin{lemma}{\label{lm-help1}} 
{\rm (i)} $H^0(\cal F\otimes\cal O_E)=0\;$  and   
$\;H^1(\cal F\otimes\cal O_E)=H^1(\cal O_E(-2)) 
\cong\underbrace{\mbb C\oplus\ldots\oplus\mbb C}_{\d\;\text{ times}}$; 
 
\nit{\rm (ii)}  
$H^0\bigl(\tS,\cal F\otimes\si^*\cal L^2(-3E)\bigr) 
\srel{\cong}{\lar}  
H^0\bigl(\tS,\cal F\otimes\si^*\cal L^2(-2E)\bigr)$  
is an isomorphism; 
 
\nit{\rm (iii)} $H^1\bigl(\tS,\cal F\otimes\si^*\cal L^2(-3E)\bigr) 
\lar H^1\bigl(\tS,\cal F\otimes\si^*\cal L^2(-2E)\bigr)$  
is injective.  
\end{lemma}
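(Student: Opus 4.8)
The plan is to reduce all three statements to the isomorphism \eqref{fe} and to the left-hand column of the diagram \eqref{cde}, and then simply to read off cohomology from long exact sequences; no new geometric input is needed. First, for (i): by \eqref{fe} we have $\cal F\otimes\cal O_E=\cal O_E(-2)\oplus\cal O_E(-1)=\bigoplus_{j=1}^{\d}\bigl(\cal O_{E_j}(-2)\oplus\cal O_{E_j}(-1)\bigr)$, and each component $E_j$ of the exceptional divisor is a projective line. Since $H^0(\mbb P^1,\cal O(-1))=H^0(\mbb P^1,\cal O(-2))=0$ and $H^1(\mbb P^1,\cal O(-1))=0$ while $H^1(\mbb P^1,\cal O(-2))\cong\mbb C$ by Serre duality, summing over the $\d$ components gives $H^0(\cal F\otimes\cal O_E)=0$ together with $H^1(\cal F\otimes\cal O_E)\cong\bigoplus_{j=1}^{\d}H^1\bigl(\cal O_{E_j}(-2)\bigr)\cong\mbb C^{\d}$, the latter being canonically $H^1(\cal O_E(-2))$. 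That is exactly (i).

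Next, for (ii) and (iii) I would take the short exact sequence given by the left column of \eqref{cde},
$$
0\lar\cal F\otimes\si^*\cal L^2(-3E)\lar\cal F\otimes\si^*\cal L^2(-2E)\lar\cal F\otimes\cal O_E\lar0,
$$
and pass to the long exact sequence in cohomology. By exactness, $H^0\bigl(\tS,\cal F\otimes\si^*\cal L^2(-3E)\bigr)\to H^0\bigl(\tS,\cal F\otimes\si^*\cal L^2(-2E)\bigr)$ is injective with image the kernel of the map to $H^0(\cal F\otimes\cal O_E)$, which vanishes by (i); hence it is surjective as well, proving (ii). For (iii), the connecting homomorphism $H^0(\cal F\otimes\cal O_E)\to H^1\bigl(\tS,\cal F\otimes\si^*\cal L^2(-3E)\bigr)$ has zero source, again by (i), so it is the zero map; exactness of the long sequence at $H^1\bigl(\tS,\cal F\otimes\si^*\cal L^2(-3E)\bigr)$ then says precisely that $H^1\bigl(\tS,\cal F\otimes\si^*\cal L^2(-3E)\bigr)\to H^1\bigl(\tS,\cal F\otimes\si^*\cal L^2(-2E)\bigr)$ is injective.

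So the whole lemma collapses to the single vanishing $H^0(\cal F\otimes\cal O_E)=0$, and the only place where any genuine work sits is the identification \eqref{fe} of $\cal F\otimes\cal O_E$, which has already been carried out just before the statement, using the splitting $\Omega^1_\tS\otimes\cal O_{E_j}=\cal O_{E_j}(-2)\oplus\cal O_{E_j}(1)$ and the fact that, on each exceptional line $E_j$, the homomorphism $\res_C$ is the evaluation of the $\cal O_{E_j}(1)$-summand at the two points $x_{j,1},x_{j,2}$ over the $j$-th node. I therefore do not expect any serious obstacle: once \eqref{fe} and the left column of \eqref{cde} are available, parts (ii) and (iii) are a formal diagram chase, and part (i) is a one-line computation on $\mbb P^1$.
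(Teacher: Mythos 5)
Your proof is correct and follows exactly the same route as the paper: part (i) is read off from the splitting \eqref{fe} via the cohomology of $\cal O_{\mbb P^1}(-1)$ and $\cal O_{\mbb P^1}(-2)$, and parts (ii) and (iii) follow from the long exact sequence attached to the first column of \eqref{cde} together with the vanishing $H^0(\cal F\otimes\cal O_E)=0$. The paper's own proof is just a terser version of the same argument.
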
 
 
\begin{proof} 
(i) It follows from \eqref{fe}.  
 
\nit(ii) and (iii) Consider the long exact sequence in cohomology  
corresponding to the first column in \eqref{cde}.  
The claims follows from (i) above.  
\end{proof} 
 
\nit{\bf Standing hypothesis.}\quad 
From now on we will assume that the nodal curve $\hat C\hra S$ has  
the property that the Wahl map of its normalization\\[1ex]  
\centerline{$ 
w_C:\overset{2}\bigwedge\,H^0(C,\cal K_C)\rar H^0(C,\cal K_C^3) 
$}\\[1ex] 
is surjective. 
 
\begin{lemma}{\label{help-2}} 
\nit{\rm (i)} The homomorphisms  
$$ 
\begin{array}{cc} 
H^1\bigl(\tS,\cal F\otimes\si^*\cal L^2(-2E)\bigr) 
\rar H^1\bigl(\tS,\Omega^1_\tS\otimes\si^*\cal L^2(-2E)\bigr), 
&\;\text{ and} 
\\[1ex] 
H^1\bigl(\tS,\cal F\otimes\si^*\cal L^2(-3E)\bigr) 
\rar H^1\bigl(\tS,\Omega^1_\tS\otimes\si^*\cal L^2(-3E)\bigr) 
\end{array} 
$$ 
are injective. 
 
\nit{\rm (ii)} The restriction homomorphisms  
$$ 
\begin{array}{lc} 
\kern2ex 
\rho_1:H^0\bigl(\tS,\Omega^1_\tS\otimes\si^*\cal L^2(-2E)\bigr) 
\rar H^0\bigl(C,\cal K_C^3\bigr), 
&\;\text{and} 
\\[1.5ex]  
\rho_{1,\Delta}:H^0\bigl(\tS,\Omega^1_\tS\otimes\si^*\cal L^2(-3E)\bigr) 
\rar H^0\bigl(C,\cal K_C^3(-\Delta)\bigr) 
\end{array} 
$$ 
are surjective. 
 
\nit{\rm (iii)}  
$\begin{array}[t]{l} 
H^0\bigl(\tS,\Omega^1_\tS\otimes\si^*\cal L^2(-3E)\bigr) 
\cong\rho_{1,\Delta}^{-1}\bigl(\;H^0\bigl(C,\cal K_C^3(-\Delta)\bigr)\;\bigr) 
\\[1ex]  
=\bigl\{ 
s\in H^0\bigl(\tS,\Omega^1_\tS\otimes\si^*\cal L^2(-2E)\bigr) 
\mid \rho_1(s)\in H^0\bigl(C,\cal K_C^3(-\Delta)\bigr) 
\bigr\}. 
\end{array}$ 
\end{lemma}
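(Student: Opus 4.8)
The plan is to prove the three parts in the order (iii), (ii), (i). Part (iii) is an unconditional diagram chase in \eqref{cde}; part (ii) then follows by combining (iii) with the commutativity of \eqref{eqn-fc} and the standing hypothesis; and part (i) is read off formally from the long exact sequences of the rows of \eqref{cde}.

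For (iii) the content is the second equality — the first is tautological, $\rho_{1,\Delta}$ being a sheaf map into $\cal K_C^3(-\Delta)$. Using the bottom row of \eqref{cde} and Lemma \ref{lm-help1}(i), the long exact cohomology sequence shows that $\res_C$ induces an isomorphism on global sections $H^0\bigl(\tS,\Omega^1_\tS\otimes\cal O_E\bigr)\xrightarrow{\ \sim\ }\bigoplus_{j}\bigl(\cal K_{C,x_{j,1}}\oplus\cal K_{C,x_{j,1}}\bigr)$: the left term of that sequence vanishes and the subsequent map is injective, both by the splitting \eqref{fe}. Now take $s\in H^0\bigl(\tS,\Omega^1_\tS\otimes\si^*\cal L^2(-2E)\bigr)$ with $\rho_1(s)\in H^0\bigl(C,\cal K_C^3(-\Delta)\bigr)$. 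The middle column of \eqref{cde} shows $s$ lifts to $H^0\bigl(\tS,\Omega^1_\tS\otimes\si^*\cal L^2(-3E)\bigr)$ precisely when the image $\bar s$ of $s$ in $H^0\bigl(\tS,\Omega^1_\tS\otimes\cal O_E\bigr)$ vanishes; by commutativity of \eqref{cde} the class $\res_C(\bar s)$ equals the residue of $\rho_1(s)$ along $\Delta$ given by the third column, which is $0$ precisely because $\rho_1(s)\in H^0\bigl(C,\cal K_C^3(-\Delta)\bigr)$. Injectivity of $\res_C$ forces $\bar s=0$, so $s$ lifts; the reverse inclusion is immediate from the two top rows of \eqref{cde}.

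For (ii) I would invoke the commutativity of \eqref{eqn-fc} (functoriality of the Wahl map under restriction to $C$, using $\si^*\cal L(-E)|_C\cong\cal K_C$ from \eqref{tang-sqn}): $\rho_1\circ w_\tS=w_C\circ\rho$, and similarly $\rho_{1,\Delta}\circ\bigl(w_\tS|_R\bigr)=w_{C,\Delta}\circ\bigl(\rho|_R\bigr)$, the latter using (iii) to ensure $w_\tS$ maps $R$ into $H^0\bigl(\tS,\Omega^1_\tS\otimes\si^*\cal L^2(-3E)\bigr)$. By Lemma \ref{lm:help}(i) the restriction $H^0\bigl(\tS,\si^*\cal L(-E)\bigr)\to H^0(C,\cal K_C)$ is surjective, hence so is $\rho$ and therefore also $\rho|_R:R\to R(C,\Delta)$; under the standing hypothesis $w_C$, and hence $w_{C,\Delta}$, is surjective. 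Thus $w_C\circ\rho$ and $w_{C,\Delta}\circ(\rho|_R)$ are surjective, which forces $\rho_1$ and $\rho_{1,\Delta}$ to be surjective onto $H^0(C,\cal K_C^3)$ and $H^0\bigl(C,\cal K_C^3(-\Delta)\bigr)$ respectively. Part (i) is now immediate: in the long exact sequence of the $k$-th row of \eqref{cde} ($k=2$ with quotient $\cal K_C^3$, $k=3$ with quotient $\cal K_C^3(-\Delta)$) the connecting map $H^0(\text{quotient})\to H^1\bigl(\tS,\cal F\otimes\si^*\cal L^2(-kE)\bigr)$ sits just after $\rho_1$, resp. $\rho_{1,\Delta}$, and just before the map in question, so the surjectivity obtained in (ii) makes it zero and the next map injective.

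The one step that will need care is the diagram chase in (iii): one must verify that \eqref{cde} genuinely commutes and that its three columns do encode the comparison between "vanishing along $\Delta$" on $C$ and "vanishing to higher order along $E$" on $\tS$ — the crux being the isomorphism $H^0\bigl(\tS,\Omega^1_\tS\otimes\cal O_E\bigr)\xrightarrow{\sim}\bigoplus_j\bigl(\cal K_{C,x_{j,1}}\oplus\cal K_{C,x_{j,1}}\bigr)$ extracted from the bottom row and \eqref{fe}. Everything else reduces to routine bookkeeping with the long exact sequences and the compatibility of the Wahl map with restriction.
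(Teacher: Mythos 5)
Your proof is correct (granting, as the paper itself does, the bottom row of \eqref{cde} and the splitting \eqref{fe}), but it runs the logic in essentially the opposite order from the paper. The paper proves (i) first: the surjectivity of $\rho_1$, extracted from \eqref{eqn-fc} and the standing hypothesis, gives the first injectivity via the second row of \eqref{cde}, and the second injectivity is then deduced from a commutative square whose left-hand edge is the injectivity statement of lemma \ref{lm-help1}(iii); part (ii) for $\rho_{1,\Delta}$ then follows from the long exact sequence of the first row of \eqref{cde} together with (i), and (iii) comes last, from the ladder of $H^0$-sequences and the isomorphism of lemma \ref{lm-help1}(ii). You instead establish (iii) first and unconditionally, by a direct chase through the middle column and the bottom row of \eqref{cde}, then obtain the surjectivity of $\rho_{1,\Delta}$ by factoring it as $\rho_{1,\Delta}\circ(w_{\tS}|_R)=w_{C,\Delta}\circ(\rho|_R)$ --- in effect pre-assembling the rear face of the cube \eqref{cube} --- and finally read off both injectivities in (i) from the vanishing of the connecting maps. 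Your route shows that (iii) does not depend on the surjectivity of $w_C$ and bypasses lemma \ref{lm-help1}(ii)--(iii) entirely (only $H^0(\cal F\otimes\cal O_E)=0$ is used); the paper's route has the mild advantage that the cube \eqref{cube} can then be quoted wholesale without re-deriving the factorization through $R$. Two small points: in your chase for (iii) you only need the \emph{injectivity} of $H^0\bigl(\tS,\Omega^1_\tS\otimes\cal O_E\bigr)\rar\bigoplus_j\bigl(\cal K_{C,x_{j,1}}\oplus\cal K_{C,x_{j,2}}\bigr)$, which is exactly $H^0(\cal F\otimes\cal O_E)=0$, so the surjectivity half of your claimed isomorphism is not actually required; and both your argument and the paper's stand or fall with the identification, implicit in the bottom row of \eqref{cde}, of the cokernel of $\cal F\otimes\si^*\cal L^2(-3E)\hra\cal F\otimes\si^*\cal L^2(-2E)$ with the \emph{untwisted} sheaf $\cal F\otimes\cal O_E$, so you are on the same footing as the paper there.
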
 
 
\begin{proof} 
(i) In the commutative diagram \eqref{eqn-fc} the homomorphisms $w_C$  
and $\rho$ are surjective, hence $\rho_1$ is also surjective. We deduce the  
injectivity of the first homomorphism from the second line in \eqref{cde}.  
On the other hand, it follows from \eqref{cde} that we have the  
commutative square  
$$ 
\xymatrix@R=2em@C=7em{ 
H^1\bigl(\tS,\cal F\otimes\si^*\cal L^2(-3E)\bigr) 
\ar[r]\ar[d]^-{\hbox{\scriptsize$\begin{array}{l} 
\rm injective\\ \rm by\;\ref{lm-help1}\rm (ii)\end{array}$}}& 
H^1\bigl(\tS,\Omega^1_\tS\otimes\si^*\cal L^2(-3E)\bigr) 
\ar[d] 
\\  
H^1\bigl(\tS,\cal F\otimes\si^*\cal L^2(-2E)\bigr) 
\ar[r]^-{\rm injective}_-{{\rm as\;} \rho_1{\;\rm surjective}}& 
H^1\bigl(\tS,\Omega^1_\tS\otimes\si^*\cal L^2(-2E)\bigr). 
} 
$$ 
It follows that the upper homomorphism is injective too, as claimed.  
 
\nit{\rm(ii)} The surjectivity of $\rho_1$ has been proved already.  
For the second one, consider the long exact sequence in cohomology  
corresponding to the first line in \eqref{cde}, and use (i) above.  
 
\nit{\rm(iii)} The first two rows of \eqref{cde}, together with (ii)  
above imply that we have the commutative diagram 
$$ 
\xymatrix@R=1.7em@C=1.5em{ 
0\ar[r]& 
H^0\bigl(\tS,\cal F\otimes\si^*\cal L^2(-3E)\bigr) 
\ar[r]\ar[d]^-{\cong}& 
H^0\bigl(\tS,\Omega^1_\tS\otimes\si^*\cal L^2(-3E)\bigr) 
\ar[r]\ar@{}[d]|-{\hbox{\Large$\cap$}}& 
H^0\bigl(C,\cal K_C^3(-\Delta)\bigr) 
\ar[r]\ar@{}[d]|-{\hbox{\Large$\cap$}}& 
0 
\\  
0\ar[r]& 
H^0\bigl(\tS,\cal F\otimes\si^*\cal L^2(-2E)\bigr)\ar[r]& 
H^0\bigl(\tS,\Omega^1_\tS\otimes\si^*\cal L^2(-2E)\bigr)\ar[r]& 
H^0\bigl(C,\cal K_C^3\bigr)\ar[r]& 
0 
} 
$$ 
The claim is a consequence of the fact that the first vertical  
arrow is an isomorphism. 
\end{proof} 
 
We define  
$R(C,\Delta):=w_C^{-1}\bigl(\;H^0\bigl(C,\cal K_C^3(-\Delta)\bigr)\;\bigr) 
\subset{\overset{2}\bigwedge}\;H^0(C,\cal K_C)$, and denote by $w_{C,\Delta}$  
the restriction of the Wahl map to it.  
Then $w_{C,\Delta}:R(C,\Delta)\rar H^0\bigl(C,\cal K_C^3(-\Delta)\bigr)$ is  
surjective because $w_C$ is surjective.  
 
The cohomology groups introduced so far fit into the following  
commutative cube: 
\begin{equation}{\label{cube}} 
\xymatrix@1@-1.7em@R=2.5em{ 
R:=\rho^{-1}\bigl( R(C,\Delta) \bigr)  
\ar[rr]^<(.4){w_{\tS,E}} 
\ar@{->>}[dd]_<(.7){\rho_\Delta}^<(.7){\hbox{\tiny 
\begin{tabular}{l} 
surjective\\ since $\rho$ is so. 
\end{tabular}}} 
\ar[dr]_-\subset 
& & H^0\bigl(\tS,\Omega^1_\tS\otimes\si^*\cal L(-3E)\bigr) 
\ar[dr]^-\subset 
\ar@{->>}'[d][dd]^-{\hbox{\tiny\begin{tabular}{l} 
surjective\\ by lemma \ref{help-2} 
\end{tabular}}}_-{\rho_{1,\Delta}} 
\\  
&\underset{}{\overset{2}\bigwedge}\;H^0\bigl(\tS,\si^*\cal L(-E)\bigr) 
\ar[rr]^<(.2){w_\tS}\ar@{->>}[dd]_<(.15)\rho 
&&H^0\bigl(\tS,\Omega^1_\tS\otimes\si^*\cal L(-2E)\bigr) 
\ar@{->>}[dd]^-{\rho_1} 
\\ 
 R(C,\Delta) 
\ar[dr]_-\subset\ar@{->>}[rr]|!{[ur];[dr]}{\phantom{xxx}}^<(.6){w_{C,\Delta}} 
&&H^0\bigl(C,\cal K_C^3(-\Delta)\bigr) 
\ar[rd]^-{\subset} 
\\ 
&\underset{}{\overset{2}\bigwedge}\;H^0(C,\cal K_C) 
\ar@{->>}[rr]^<(.25){w_C} 
& & H^0(C,\cal K_C^3) 
} 
\end{equation} 
The $'\subset\,'$ signs on various arrows denote inclusions.

\end{document}